\documentclass[a4paper]{amsart}
\RequirePackage{amsmath, amsfonts, amssymb, amsthm}
\RequirePackage[utf8]{inputenc}
\RequirePackage{mathrsfs}
\RequirePackage[english]{babel}
\RequirePackage[T1]{fontenc}
\RequirePackage{mathpple}
\RequirePackage{tikz-cd}
\RequirePackage{mathtools}
\RequirePackage{stmaryrd}
\RequirePackage{xcolor}
\RequirePackage{faktor}
\usepackage{mathrsfs}  
\usepackage{stmaryrd}  
\usepackage{multirow}
\usepackage{array}
\usepackage{enumitem}

\RequirePackage[%
left = \glqq,%
right = \grqq,%
leftsub = \glq,%
rightsub = \grq%
]{dirtytalk}

\usepackage{setspace}
\newtheorem{thm}{Theorem}[section]
\newtheorem{prop}[thm]{Proposition}
\newtheorem{lem}[thm]{Lemma}
\newtheorem{cor}[thm]{Corollary}

\newtheorem*{claim}{Claim}
\newtheorem{problem}{Problem}
\theoremstyle{plain}
\theoremstyle{remark}
\newtheorem{rmk}[thm]{Remark}

\newtheorem{defin}[thm]{Definition}

\DeclareMathOperator{\lh}{lh}

\DeclareMathOperator{\supp}{supp}

\DeclareMathOperator{\Ch}{Ch}

\newcommand{\NNN}{\mathbb{N}}
\newcommand{\RRR}{\mathbb{R}}

\newcommand{\ZZZ}{\mathbb{Z}}

\newcommand{\Ss}{\mathcal{S}}

\date{\today}

\begin{document}

\title[On the $A$-invariance of the hereditary Baire property]{On the $A$-invariance of the hereditary Baire property and related results}
\author[M.\ Krupski]{Mikołaj Krupski}
\address{Institute of Mathematics\\ University of Warsaw\\ ul. Banacha 2\\
02--097 Warszawa, Poland }
\email{mkrupski@mimuw.edu.pl}
\author[K.\ Kucharski]{Kacper Kucharski}
\address{Institute of Mathematics\\ University of Warsaw\\ ul. Banacha 2\\
02--097 Warszawa, Poland }
\email{k.kucharski6@uw.edu.pl}

\thanks{The authors were partially supported by the NCN (National Science Centre, Poland) research Grant no. 2020/37/B/ST1/02613}

\begin{abstract}
We prove that if $X$ and $Y$ are first-countable perfect spaces such that the free Abelian topological groups $A(X)$ and $A(Y)$ are topologically isomorphic, then $X$ is a hereditarily Baire space if and only if $Y$ is hereditarily Baire as well. We also establish that for any Tychonoff space, if there exists a continuous linear surjection of the space $C_p(X)$ onto the space $C_p(Y)$ and the space $X$ is either strongly $\sigma$-scattered or has property $(\kappa)$, then $Y$ also satisfies those properties. Additionally, we obtain the following result: if $X$ and $Y$ are Tychonoff spaces in which every closed set has a $W$-point, and if the free Abelian topological groups $A(X)$ and $A(Y)$ are topologically isomorphic, then $X$ is scattered if and only if $Y$ is scattered.
\end{abstract}

\subjclass[2020]{22A05, 54C35, 54E52, 54G12}

\keywords{free Abelian topological group, pointwise convergence topology, $A$-equivalence, $l$-equivalence, hereditarily Baire space, scattered space, $\sigma$-scattered space, property $(\kappa)$}

\maketitle

\section{Introduction}
Unless stated otherwise, all spaces
under consideration are assumed to be Tychonoff.
For a space $X$, $C_p(X)$ denotes the space of continuous real-valued functions on $X$ endowed with the topology of pointwise convergence, while $F(X)$ (respectively, $A(X)$) denotes the free (respectively, free Abelian) topological group on $X$. For definitions and terminology, we refer the reader to Section 2. One of the main lines of research in the theory of $C_p$-spaces and the theory of free topological groups is the question of which topological properties of a space $X$ are encoded in the linear structure of $C_p(X)$, respectively, in the topological-group structure of $F(X)$ or $A(X)$ (see \cite{Ar}). Such properties are referred to as $l$-invariants, $M$-invariants, $A$-invariants, respectively. Finding them is important, as they serve as a tool for distinguishing function spaces or free (free Abelian) topological groups. It will be convenient to introduce the following terminology:
\begin{defin}
We say that spaces $X$ and $Y$ are \textit{$l$-equivalent} if the spaces $C_p(X)$ and $C_p(Y)$ are linearly homeomorphic. Similarly, we say that $X$ and $Y$ are \textit{$M$-equivalent} (respectively, \textit{$A$-equivalent}) if the groups $F(X)$ and $F(Y)$ (respectively, $A(X)$ and $A(Y)$) are topologically isomorphic. A topological property $P$ is said to be \textit{$M$-invariant}, \textit{$A$-invariant}, \textit{$l$-invariant}, respectively whenever it is invariant under the $M$-equivalence, $A$-equivalence, $l$-equivalence relation, respectively.
\end{defin}

It is well known that we have the following relationship between the equivalence relations defined above (see \cite{Ar}):
$$M\mbox{-equivalence}\Rightarrow A\mbox{-equivalence} \Rightarrow l\mbox{-equivalence}$$

In the present paper we are concerned with the following two open problems in the theory of function spaces and free topological groups.

\begin{problem}\label{problem1}
Is the hereditary Baire property $l$-invariant? Is it $M$-invariant or $A$-invariant? What happens in the realm of metrizable spaces?
\end{problem}

\begin{problem}\label{problem2}
Is scatteredness $l$-invariant? Is it $M$-invariant or $A$-invariant?
\end{problem}

Recall that a space $X$ is \textit{Baire} if the Baire category theorem holds in $X$, i.e., for any countable family $\{U_n:n\in \omega\}$ consisting of open dense subsets of $X$, the intersection $\bigcap_{n\in\omega} U_n$ is dense in $X$. A space $X$ is \textit{hereditarily Baire} if every closed subset of $X$ is Baire. We say that $X$ is \textit{scattered} if every nonempty subset of $X$ has an isolated point.
On the one hand, it was established by Baars, de Groot and Pelant \cite{BGP} that in the realm of metrizable spaces, complete metrizability is an $l$-invariant. On the other hand, it is known that, for metrizable spaces, being Baire is not invariant under the $l$-equivalence relation (see~\cite{Ma}). Although Problem~\ref{problem1} arises naturally in this context, to the best of our knowledge, it has never appeared in the literature. However, it has circulated among specialists in the field. Problem~\ref{problem2} is well known. For the $M$-equivalence relation, it was posed by Arhangel'skii (see~\cite{O}) in the 1980s. Perhaps the best result regarding this question was obtained by Baars~\cite{B}, who proved that scatteredness is an $l$-invariant for first-countable paracompact spaces.
At first sight, Problems~\ref{problem1} and \ref{problem2} seem unrelated. However, our next theorem (proved in Section~4) builds a bridge between them.

\begin{thm}\label{thm:Cp-scattered}
Suppose that there is a linear continuous surjection of $C_p(X)$ onto $C_p(Y)$. If $X$ is a countable union of closed scattered subspaces, then so is $Y$.
\end{thm}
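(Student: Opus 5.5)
We follow the standard support-map technique for continuous linear surjections $T\colon C_p(X)\to C_p(Y)$. For $y\in Y$, the functional $f\mapsto T(f)(y)$ is continuous and linear on $C_p(X)$. Hence it is a finite linear combination of point evaluations, and it has a finite support $\supp(y)\subseteq X$. Since $T$ is onto, the dual map is injective, so $\supp(y)\neq\emptyset$ for every $y$.

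We will use the following well-known facts about the support map (as in Baars' and Arhangel'skii's work).
\begin{enumerate}[label=(\roman*)]
\item $Y=\bigcup_{n,m\in\NNN} Y_{n,m}$, where $Y_{n,m}=\{y: |\supp(y)|\le n,\ \|T^*\delta_y\|\le m\}$, and each $Y_{n,m}$ is closed in $Y$. This follows from lower semicontinuity of the support size and of the norm of the functional.
\item The map $y\mapsto \supp(y)$ is lower semicontinuous: if $x\in\supp(y)$ and $U\ni x$ is open, then $\supp(y')\cap U\neq\emptyset$ for all $y'$ near $y$.
\item On each $Y_{n,m}$ one can further decompose into countably many closed pieces $Y_{n,m,k}$. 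On each such piece, for $y$ with $|\supp(y)|=n$ and with coefficients bounded away from $0$ by $1/k$, the support map is continuous into the hyperspace $[X]^{\le n}$ with the Vietoris topology. In particular, a point of $Y_{n,m,k}$ whose support has exactly $n$ points has a neighbourhood on which supports remain $n$-element and move continuously.
\end{enumerate}

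Now write $X=\bigcup_j X_j$ with each $X_j$ closed and scattered. Fix a finite tuple $\bar j=(j_1,\dots,j_n)$ and consider
\[
Y_{n,m,k,\bar j}=\{y\in Y_{n,m,k}: |\supp(y)|=n,\ \supp(y)\subseteq X_{j_1}\cup\dots\cup X_{j_n}\}.
\]
These countably many sets cover $Y$ after also splitting according to the exact size of the support. The plan is to show that the closure of each such set in $Y$ is scattered, or at least that each piece is a countable union of closed scattered sets.

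The key step, and the main obstacle, is the following. Let $Z\subseteq Y$ be closed with $|\supp(y)|\le n$ and $\supp(y)\subseteq F$ for all $y\in Z$, where $F$ is closed scattered. We must show that $Z$ is a countable union of closed scattered subsets. We argue by induction on $n$.

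Take nonempty $A\subseteq Z$. First, the set $A'$ of points of $A$ with maximal support size $n$ is relatively open in $A$, by lower semicontinuity. On $A'$, refined by the bounded-coefficient pieces, the support map $s\colon A'\to[F]^{n}$ is continuous. Moreover, $s$ is finite-to-one on neighbourhoods up to coefficients. More precisely, we aim to show that $s$ restricted to each piece has fibres that are discrete, or else we handle them by a further countable splitting.

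Since $[F]^n$ with the Vietoris topology is scattered (a finite power of a scattered space, modulo a finite-to-one quotient), the image $s(A')$ has an isolated point $P$. The fibre $s^{-1}(P)\cap A'$ is then relatively open in $A'$. Points in this fibre are determined by their coefficient vectors in $\RRR^n$, since $y\mapsto T^*\delta_y$ is injective.

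This fibre is where the difficulty lies. Showing that it has an isolated point requires another countable decomposition: split $Y$ further according to rational boxes controlling the coefficients, so that fibres become discrete. Alternatively, we show directly that the fibre is a closed subset of $Y$ homeomorphic to a subset of $\RRR^n$ carrying the weak topology, and hence handle it by the countable splitting.

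If $A'=\emptyset$, we drop to support size at most $n-1$ and apply the induction hypothesis.

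We expect the hardest part to be making the decomposition fine enough that fibres over isolated supports are discrete while each piece stays closed. To do this we would first try intersecting with the closed sets $\{y: T(f_i)(y)\in[q,q']\}$ for a countable family of test functions $f_i$ that separate the finitely many coordinates.
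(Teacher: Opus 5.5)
There is a genuine gap. Your skeleton is the right one: cover $Y$ by the closed sets where the support lies in some $X_j$ and has at most $n$ points, then use scatteredness of $X$ to find a relatively open part of $A$ on which the support is one fixed finite set $P$. But the argument stops at the decisive step. You never show that the fibre $s^{-1}(P)\cap A'$ has an isolated point. The remedies you sketch are stated only as intentions: splitting by the norm of $T^*\delta_y$, by coefficients bounded below, by rational boxes for test functions, or viewing the fibre as a subset of $\RRR^n$. They also point the wrong way, because the fibre is in fact finite. As written, the proposal is not a proof.

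The missing idea is a dimension count: for every finite $P\subseteq X$, at most $|P|$ points $y\in Y$ satisfy $\supp(y)\subseteq P$.
\begin{itemize}
\item Distinct point evaluations on the Tychonoff space $Y$ are linearly independent.
\item $T^*$ is injective because $T$ is onto, so the functionals $\delta_y\circ T$ for distinct $y$ are linearly independent.
\item Every such functional with $\supp(y)\subseteq P$ lies in the $|P|$-dimensional span of $\{\delta_x\colon x\in P\}$.
\end{itemize}
Equivalently, $|P|+1$ such points would let $T$ induce a linear surjection of $\RRR^{|P|}$ onto $\RRR^{|P|+1}$; this is Lemma~\ref{lem:fin}.

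Hence the fibre is finite, nonempty, and relatively open in $A$, since $A'$ is relatively open in $A$ by lower semicontinuity. As $Y$ is $T_1$, each point of the fibre is isolated in $A$.

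With this lemma, your items (i) and (iii) are unnecessary. Take the $X_k$ increasing; then each closed set $\{y\colon \supp(y)\subseteq X_k,\ |\supp(y)|\le n\}$ is already scattered, which is exactly the paper's proof. The paper pins down the support points one at a time, using isolated points of $\bigcup\{\supp(y)\cap U_i\colon y\in V_{i-1}\cap A\}$ inside $X_k$. You instead appeal to scatteredness of $[F]^n$ and Vietoris continuity of $s$ on $A'$; that difference is cosmetic. Your continuity claim on $A'$ needs only lower semicontinuity plus the bound $|\supp(y)|\le n$, not the coefficient bounds.
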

A space that can be represented as a countable union of closed scattered subspaces is referred to as \textit{strongly $\sigma$-scattered}.
The above theorem generalizes a recent result of Eysen, Valov and Leiderman~\cite{ELV}, who proved an analogous statement under the additional assumption that both $X$ and $Y$ are metrizable. Using Theorem \ref{thm:Cp-scattered}, it is not difficult to show that an affirmative answer to Problem 1 implies an affirmative answer to Problem 2 (see Remark \ref{final_remark}).

In this paper we will prove the following:

\begin{thm}\label{hB-A-invariant}
 Let $X$ and $Y$ be first-countable perfect spaces. If $X$ and $Y$ are $A$-equivalent, then $X$ is hereditarily Baire if and only if $Y$ is hereditarily Baire.
\end{thm}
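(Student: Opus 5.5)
The plan is to argue by contraposition through a characterization of hereditary Baireness by closed copies of the rationals. By symmetry it suffices to show: if $X$ is not hereditarily Baire, then neither is $Y$. I would first invoke the theorem of Debs that a first-countable regular space is hereditarily Baire if and only if it has no closed subspace homeomorphic to $\mathbb{Q}$. Note that a countable crowded first-countable regular space is metrizable, hence homeomorphic to $\mathbb{Q}$ by Sierpiński's theorem. So we may fix a closed $C\subseteq X$ with $C\cong\mathbb{Q}$. Identifying $A(X)=A(Y)$, $C$ is a countable closed crowded subset of $A(Y)$, and the task is to transport it to a closed copy of $\mathbb{Q}$ inside $Y$.

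\textbf{Step 1: layers of $A(Y)$.} For each $n$ let $A_n(Y)$ be the set of words of reduced length $\le n$. These sets are closed in $A(Y)$. Moreover, the layer $A_n(Y)\setminus A_{n-1}(Y)$ is locally homeomorphic, via the canonical multiplication map $i_n$, to an open subset of $(Y\oplus -Y)^n$ modulo coordinate permutations. This is the Arhangel'skii--Tkachenko type description of $A(Y)$; for first-countable $Y$ the finite products involved are still first-countable. Put $D_n=C\cap A_n(Y)$. The $D_n$ form an increasing sequence of closed subsets of $C$, and $C=\bigcup_n D_n$.

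\textbf{Step 2: localize to one layer.} I would like a single $n$ and a nonempty relatively open $V\subseteq D_n\setminus A_{n-1}(Y)$ containing a closed crowded countable subset. Baire category inside $C$ is not available, since $C$ is itself meager. This is where perfectness is used. Each layer $A_n\setminus A_{n-1}$ is open in the closed set $A_n$, and in a perfect space it is an $F_\sigma$. So $C$ decomposes into countably many closed pieces, each lying inside a single layer where $i_n$ is a local homeomorphism. A countable crowded space cannot be a countable union of closed scattered pieces that are \emph{relatively open} in one another in a well-founded way. I would exploit this together with a Cantor--Bendixson analysis of $C\cap A_n$ to extract a crowded closed subset $C'$ sitting in one layer. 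Pulling $C'$ back through $i_n$ gives a closed copy of $\mathbb{Q}$ in a finite power $Y^n$, or in $(Y\oplus -Y)^n$.

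\textbf{Step 3: from $Y^n$ to $Y$.} It remains to show that if $Y$ is first-countable, perfect and hereditarily Baire, then so is $Y^n$. Equivalently, $Y^n$ contains no closed copy of $\mathbb{Q}$. Take a closed countable crowded $Q'\subseteq Y^n$ and project onto the coordinates. The set $K$ of all coordinates of points of $Q'$ is countable, so its closure in $Y$ is a separable closed subspace. Since $Y$ is hereditarily Baire and first-countable, every closed countable subset of $Y$ is scattered; in particular $\overline{K}\cap$(countable sets) is scattered. I would then induct on $n$ and on Cantor--Bendixson rank to show that $Q'$ would have to be scattered, a contradiction.

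I expect Steps 2 and 3 to be the main obstacle, and Step 2 in particular. The naive decomposition $C=\bigcup D_n$ does not immediately yield a crowded piece, because $\mathbb{Q}$ is a countable union of closed scattered sets. The argument must therefore genuinely use perfectness and the fine structure of $i_n$ near lower layers. What I would try first is to replace $C$ by the closure of a carefully chosen subset on which the length function is locally constant. The existence of such a subset should come from lower semicontinuity of length together with $G_\delta$-ness of closed sets.
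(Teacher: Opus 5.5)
There is a genuine gap, and it breaks the whole strategy, not just one step. Step 3 is false: hereditary Baireness is not preserved by squares, even among separable metrizable spaces. Run the usual Bernstein recursion over the $\mathfrak{c}$ many Cantor sets in $\RRR$, choosing the points put into $B$ outside $\mathbb{Q}+\ZZZ\sqrt{2}$ and outside the $\pm\sqrt{2}$-translates of earlier choices. This gives a Bernstein set $B\supseteq\mathbb{Q}\cup(\mathbb{Q}+\sqrt{2})$ such that $x\in B\setminus\mathbb{Q}$ implies $x+\sqrt{2}\notin B$. Such a $B$ is hereditarily Baire. Indeed, if $D\subseteq B$ were a closed copy of $\mathbb{Q}$, then $\overline{D}\setminus D$ would be a dense $G_\delta$ in the perfect set $\overline{D}\subseteq\RRR$. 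It would therefore contain a Cantor set, and that Cantor set misses $B$, which is impossible for a Bernstein set. Yet $\{(q,q+\sqrt{2}):q\in\mathbb{Q}\}$ is a closed copy of $\mathbb{Q}$ in $B^2$.

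The same example shows that no repair of Steps 2--3 can work. The map $q\mapsto q-(q+\sqrt{2})$ embeds $\mathbb{Q}$ as a closed subset of $A(B)$ inside the single layer $A_2(B)\setminus A_1(B)$. To check this, use that $A_2(B)$ is closed, and evaluate the continuous homomorphisms extending the constant function $1$, the inclusion $B\hookrightarrow\RRR$, and $x\mapsto x^2$. So the information you actually use cannot force a closed copy of $\mathbb{Q}$ in $Y$. That information is: a closed copy of $\mathbb{Q}$ sitting closed in $A(Y)$, even one localized to one layer. Your argument never uses that $\phi$ is onto, i.e.\ that $Y$ in turn generates $A(X)$.

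That surjectivity is the missing idea. The paper uses it through the linked support maps $S=\supp_\phi$ and $T=\supp_{\phi^{-1}}$, with $x\in T(S(x))$ and $y\in S(T(y))$. It starts from a closed copy $Q$ of $\mathbb{Q}$ in $Y$, and perfectness makes $Q$ a $G_\delta$; that is where perfectness is needed, not in any layer decomposition. First countability, together with the fact that compact subsets of $A(X)$ lie in some $A_n(X)$, makes $|T|$ bounded on a nonempty relatively open $V\subseteq Q$, and after shrinking, constant. Then, instead of constructing a copy of $\mathbb{Q}$ in $X$ by hand, the paper builds a winning strategy for Player I in $\Ch(X)$ (Lemmas \ref{lem:case1} and \ref{lem:main_tool}). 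Debs's theorem then produces the closed copy of $\mathbb{Q}$ in $X$.

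Two side remarks. First, the closed-copy-of-$\mathbb{Q}$ characterization you cite is van Douwen's, not Debs's. Second, your Step 2 is easy without perfectness. The same compactness argument bounds $\lh(\phi(x))$ near each point of $C$, and $A_{n-1}(Y)$ is closed, so the set where the length attains its local maximum is relatively open in $C$. Neither point rescues Step 3.
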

Recall that a space $X$ is \emph{perfect} if every closed subset of $X$ is a $G_\delta$-set. In particular, the hereditary Baire property is an $A$-invariant for metrizable spaces (cf. Problem \ref{problem1}). Concerning Problem \ref{problem2}, we will show the following:

\begin{thm}\label{thm_scattered_main}
 Let $X$ and $Y$ be first-countable spaces. If $X$ and $Y$ are $A$-equivalent, then $X$ is scattered if and only if $Y$ is scattered.
\end{thm}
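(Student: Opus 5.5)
We prove that if $X$ is scattered then $Y$ is scattered; the converse follows by symmetry. Fix a topological isomorphism $i\colon A(Y)\to A(X)$ and write $\supp(g)\subset X$ for the finite support of $g\in A(X)$. Suppose that $P\subset Y$ is nonempty and closed. We must find an isolated point of $P$. Put
$$Z=\overline{\bigcup_{y\in P}\supp\bigl(i(y)\bigr)}\subset X,$$
a closed, hence scattered, subspace. Let $Z^{(\alpha)}$ denote its Cantor--Bendixson derivatives. For $y\in P$, let $r(y)$ be the largest $\alpha$ with $\supp(i(y))\cap Z^{(\alpha)}\neq\emptyset$. Choose $y_0\in P$ for which $r(y_0)=\alpha$ is minimal, and let $Z_\alpha=Z^{(\alpha)}\setminus Z^{(\alpha+1)}$ be the (discrete) set of isolated points of $Z^{(\alpha)}$.

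The basic tool is a coefficient-extraction device. Let $x\in Z_\alpha$ and let $U$ be open in $X$ with $U\cap Z^{(\alpha)}=\{x\}$. Take $f\in C(X)$ with $f(x)=1$ and $f=0$ off $U$. The homomorphism $\hat f\colon A(X)\to\RRR$ extending $f$ is continuous, so $y\mapsto \hat f(i(y))$ is continuous on $P$. Near $y_0$, the only support points that can lie in $U$ and carry non-negligible weight are $x$ and points of rank $<\alpha$. I will first deal with the lower-rank points by an induction on rank in $Z\cap U$. At a rank-$0$ point the corresponding coefficient map is integer-valued and continuous, hence locally constant, and I will propagate this upward. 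The aim is that the coefficient $c_x(y)$ of $x$ in $i(y)$ is locally constant near $y_0$. The same argument, applied at every rank $\le\alpha$, should show that the whole restriction of $i(y)$ to $Z_\alpha$ is locally constant, provided we know that only finitely many points of $Z_\alpha$ can appear in the supports of $i(y)$ for $y$ close to $y_0$.

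The main obstacle is exactly this finiteness. I will handle it with first countability of $Y$. Suppose no neighbourhood of $y_0$ in $P$ has this property. Then we can choose a sequence $y_k\to y_0$ in $P$ and pairwise distinct points $x_k\in Z_\alpha\cap\supp(i(y_k))$ outside $\supp(i(y_0))$. Since $i(y_k)\to i(y_0)$ in $A(X)$, the supports of the $i(y_k)$ together form a relatively compact (in fact bounded) set. Passing to a subsequence, the $x_k$ admit pairwise disjoint open neighbourhoods $U_k$, each meeting $Z^{(\alpha)}$ only in $x_k$, and forming a discrete family away from the accumulation set. Put $f=\sum_k t_k f_k$, where each $f_k$ is a bump at $x_k$ supported in $U_k$ and the scalars $t_k$ are chosen inductively. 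The choice is made so that the sequence $\hat f(i(y_k))$ does not converge to $\hat f(i(y_0))$, which contradicts the continuity of $i$. Making this construction legitimate is the delicate part: $f$ must be continuous, so the family $(U_k)$ must be locally finite at the limit points, and we must control the lower-rank contributions inside the $U_k$. First-countability of $X$ is used here to choose the $U_k$ shrinking along countable bases.

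Granting this step, we iterate. Using the isomorphism $A(X)\to A(X)/\langle Z_\alpha\rangle$, or equivalently subtracting the locally constant $Z_\alpha$-part, we pass on a smaller neighbourhood of $y_0$ to ranks $<\alpha$ and descend. Since the ordinals are well-founded and each stage involves only finitely many coefficients, finitely many steps give a neighbourhood $V$ of $y_0$ in $P$ on which $y\mapsto i(y)$ is constant. Injectivity of $i$ on $Y\subset A(Y)$ then forces $V=\{y_0\}$, so $y_0$ is an isolated point of $P$. Hence $Y$ is scattered.
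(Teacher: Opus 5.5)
There is a genuine gap, and it is not confined to the finiteness step you call delicate. Your intermediate claim, that the coefficient $c_x(y)$ of $x\in Z_\alpha$ is locally constant near $y_0$, is false. So is your end conclusion that a point of minimal rank is isolated in $P$.

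Here is a counterexample. Take $X=Y=\omega+1$ with limit point $\infty$, and let $i$ be the homomorphism with $i(\infty)=\infty$ and $i(n)=2\infty-n$. It is continuous because $2\infty-n\to\infty$, and $i\circ i=\mathrm{id}$, so $i$ is a topological automorphism. For $P=Y$ you get $Z=X$ and $Z^{(1)}=\{\infty\}$. Every support contains $\infty$, so $r\equiv 1$ on $P$ and $y_0=\infty$ is an admissible choice. Yet $\infty$ is not isolated, and $c_\infty(\infty)=1$ while $c_\infty(n)=2$ for every $n$.

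The step that breaks is ``propagate this upward''. Each rank-$0$ coefficient $c_n(\cdot)$ is indeed locally constant, but not uniformly in $n$. Infinitely many distinct lower-rank support points (here the points $n$, with coefficient $-1$) accumulate at $x$, and in the limit they hand their coefficients over to $x$. Thus $\hat f(i(n))=2f(\infty)-f(n)\to f(\infty)$ is perfectly continuous while the coefficient of $\infty$ jumps. Three further points: minimality of rank does not prevent this; your finiteness argument only controls points of $Z_\alpha$; and the descent never addresses support points of rank $>\alpha$, which nearby $y$ are allowed to have.

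The missing idea is to select by support size rather than by rank, and to aim for a nonempty finite relatively open subset of $P$ instead of proving that a prescribed point is isolated. This is what the paper does, in four steps.

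First, take a point of $P$. First-countability there gives convergent sequences, and compact subsets of $A(X)$ lie in some $A_n(X)$. Together these give a neighbourhood $E$ of that point with $|\supp(i(y))|\le M$ for all $y\in E$.

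Second, choose $a_0\in P\cap E$ whose support has maximal size $m$, and take disjoint neighbourhoods $U_1,\dots,U_m$ of its support points. By lower semicontinuity, the support of every nearby $y\in P$ meets each $U_j$ in exactly one point.

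Third, apply scatteredness of $X$ to the (not necessarily closed) set of support points lying in $U_j$. Shrinking successively, you reach a nonempty relatively open $V\subseteq P$ on which all supports equal one fixed set $\{t_1,\dots,t_m\}$.

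Finally, the counting lemma makes $V$ finite: at most $|F|$ points of $Y$ have support inside a finite $F\subseteq X$, by a dimension argument through the induced linear bijection between $C_p(X)$ and $C_p(Y)$. In the example above, this procedure lands on the isolated points $n$ (support size $2$), not on $\infty$.
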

In fact, we prove a slightly more general result in which the first-countability assumption is relaxed to the following condition: ``every closed subspace contains a $W$-point''. Theorem \ref{thm_scattered_main} is a partial refinement of the aforementioned result of Baars, showing that paracompactness is superfluous for the $A$-invariance of scatteredness.

We also answer an open question of Leiderman (personal communication) by proving that the so-called property $(\kappa)$ (for the definition, see Section 2) is preserved under linear continuous surjections of function spaces $C_p(X)$ (cf. Proposition \ref{proposition_property_kappa}).

\section{Preliminaries}
\noindent The first infinite ordinal number is denoted by $\omega$. The symbol  $\NNN$ stands for the set of all positive integers. If $X$ is a set and $n \in \omega$, then by $[X]^n$ we mean the set of all subsets of $X$ of cardinality $n$, and by $[X]^{\leq n}$, the set of all subsets of $X$ of cardinality at most $n$. Finally, by $[X]^{< \omega}$ we denote the set of all finite subsets of $X$.

\subsection{Free topological groups}

For a topological space $X$, \textit{the free (Abelian) topological group on $X$} is the (Abelian) topological group $F(X)$ (respectively, $A(X)$)
satisfying the following two conditions:
\begin{enumerate}[label=(\roman*)]
\item The space $X$ is a subspace of $F(X)$ (respectively, $A(X)$),
\item For any (Abelian) topological group $G$ and any continuous mapping 
$$f \colon X \to G$$ 
there exists a unique continuous homomorphism $\tilde{f} \colon F(X)\to G$ (respectively, $\tilde{f} \colon A(X)\to G$)
 such that $\tilde{f} \upharpoonright X = f$.
\end{enumerate}
Algebraically, $A(X)$ is the free Abelian group with $X$ being the set of generators. The elements of $A(X)$ are called \textit{words}. Every element of $A(X)$ distinct from the identity, is of the form
$\sum_{i=1}^n a_i x_i$, 
where $a_i\in \ZZZ\setminus\{0\}$, $x_i \in X$ and $n\in \NNN$. A word $\varphi = \sum_{i=1}^n a_i x_i$ is called {\it reduced} if all points $x_i \in X$ for $i = 1, \dots, n$ are distinct.
For any word $\varphi\in A(X)$ we define its \textit{length} $\lh(\varphi)$ by $\lh(\varphi) = \sum_{i = 1}^{n} |a_i|$, where $\varphi=\sum_{i=1}^n a_ix_i$ is the reduced form of $\varphi$.
The length of the identity in $A(X)$ (the empty word) is $0$ by definition.

For $n \in \omega$ we consider the sets:
$$
A_n(X) = \{\varphi \in A(X) \; \colon \; \lh(\varphi) \leq n\}.
$$
It is clear that if $n \leq m$, then $A_n(X) \subseteq A_m(X)$ and also that $A(X) = \bigcup_{n \in \omega} A_n(X)$. Moreover, straight from the definition, it follows that
\begin{equation}\label{eq:lh}
    \text{if } \varphi = \sum_{i = 1}^{n} a_i x_i \text{ for some } a_i \in \ZZZ \setminus \{0\} \text{ and distinct } x_i \in X, \text{ then } \lh(\varphi) \geq n.
\end{equation}

For more information on free (Abelian) topological groups we refer the reader to the book \cite[Chapter 7]{AT}.

\subsection{Topological games}

For a topological space $X$, {\it the strong Choquet game on $X$}, denoted by  $\Ch(X)$ (cf. \cite[Chapter 8.D]{K}), is a game with $\omega$-many innings, played alternately by Players I and II. Player I begins the game by picking a point $x_0 \in X$ and its open neighborhood $U_0$. Player II responds by picking an open set $V_0$ satisfying $x_0 \in V_0 \subseteq U_0$. In the second round, Player I chooses a pair $(x_1, U_1)$, where $x_1\in V_0$ and $U_1$ is an open neighborhood of $x_1$ contained in $V_0$. Player II responds with an open set $V_1$ such that $x_1\in V_1\subseteq U_1$. The game continues this way. The play
\[
\begin{array}{c||c c c c c c c c}
\multirow{2}{*}{\begin{tabular}{c} \text{I} \\ \text{II} \end{tabular}} 
& (x_0, U_0) & & (x_1, U_1) & & \cdots & (x_n, U_n) & & \cdots \\
\cline{1-9}
& & V_0 & & V_1 & \cdots & & V_n & \cdots \\
\end{array}
\]
is won by Player I if $\bigcap_{n \in \omega} V_n = \emptyset$; otherwise, Player II wins.\smallskip

By $\tau_X$, we denote the family of all nonempty open subsets of $X$. \emph{A strategy for Player I in the game $\Ch(X)$} is a map
$\sigma \colon \tau_X^{< \omega} \to X \times \tau_{X}$
defined inductively as follows:
\begin{itemize}
    \item $\sigma(\emptyset) = (x_0, U_0) \in X \times \tau_{X}$ such that $x_0 \in U_0$;
    \item Assuming $\sigma$ has been defined for sequences of length less than or equal to $n$, an $(n + 1)$-tuple $(V_0, \dots, V_{n}) \in \tau_X^{n + 1}$ is called {\it admissible} if the initial segment of the play
\[
\begin{array}{c||c c c c c c c}
\multirow{2}{*}{\begin{tabular}{c} \text{I} \\ \text{II} \end{tabular}} 
& (x_0, U_0) & & (x_1, U_1) & & \cdots & (x_{n-1}, U_{n-1}) \\
\cline{1-8}
& & V_0 & & V_1 & \cdots & & V_{n}\\
\end{array}
\]
is played according to the rules of the game $\Ch(X)$ and
$\sigma(V_0, \dots,V_i) = (x_i, U_i)$ 
for $i = 0, \dots, n - 1$. Given an admissible $(n +  1)$-tuple $(V_0, \dots, V_{n})$, we choose a pair $(x_{n}, U_{n}) = \sigma(V_0, \dots, V_{n})$ such that $x_{n} \in V_{n} \subseteq U_{n}$.
\end{itemize}
A strategy $\sigma$ is called {\it winning}, if Player I wins every run of the game $\Ch(X)$, provided she plays according to $\sigma$.\smallskip

The notion of the strong Choquet game provides a convenient tool to characterize first-countable perfect spaces that are hereditarily Baire, as was shown in \cite{D} (cf. \cite{T}). Namely, we have the following:
\begin{thm}[Debs {\cite[Proposition 1.2 and Proposition 2.7]{D}}]
Let $X$ be a space.
\begin{enumerate}
    \item If Player I does not have a winning strategy in the strong Choquet game $\Ch(X)$, then every $G_{\delta}$ subspace of $X$ is Baire.
    \item Assume that $X$ is first-countable and regular. If Player I has a winning strategy in the strong Choquet game $\Ch(X)$, then $X$ has a closed subspace homeomorphic to the rationals.
\end{enumerate}
\end{thm}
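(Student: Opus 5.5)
The two parts are independent, and I would prove them separately: part (1) by contraposition, and part (2) by growing a countable tree of plays compatible with the winning strategy.

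\emph{Part (1).} Assume some $G_\delta$ set $G=\bigcap_n G_n$ (with $G_n$ open in $X$ and decreasing) is not Baire, and build a winning strategy for Player I. Non-Baireness gives an open $W\subseteq X$ with $W\cap G\neq\emptyset$, and dense open subsets $D_n=O_n\cap G$ of $G$ (with $O_n$ open in $X$) such that $W\cap G\cap\bigcap_n D_n=\emptyset$.

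Player I always keeps her points inside $G$. At stage $0$ she picks $x_0\in W\cap G\cap O_0$ and plays $U_0=W\cap O_0\cap G_0$. At stage $n$, Player II's last move $V_{n-1}$ contains $x_{n-1}\in G$, so $V_{n-1}\cap G$ is a nonempty open subset of $G$. By density of $D_n$ she can choose $x_n\in V_{n-1}\cap O_n\cap G$, and she plays $U_n=V_{n-1}\cap O_n\cap G_n$. Then
\[
\bigcap_n V_n\subseteq\bigcap_n U_n\subseteq W\cap\bigcap_n G_n\cap\bigcap_n O_n=W\cap G\cap\bigcap_n D_n=\emptyset,
\]
so this strategy is winning. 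This part is routine.

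\emph{Part (2).} Fix a winning strategy $\sigma$ for Player I. For each $x$ fix a decreasing countable base $B_k(x)$. I build a tree of finite admissible plays indexed by $s\in\omega^{<\omega}$. At node $s$, Player I's move is $(x_s,U_s)$ given by $\sigma$. Using regularity, Player II's replies at $s$ are the sets $V_{s,k}=B_{m_k}(x_s)$ with $\overline{V_{s,k}}\subseteq U_s$, where $m_k\ge k$ is increasing. The child $s^\frown k$ is obtained by feeding $V_{s,k}$ to $\sigma$. Let $Q=\{x_s:s\in\omega^{<\omega}\}$; it is countable.

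\begin{itemize}
\item \emph{$Q$ is crowded.} Since $x_{s^\frown k}\in V_{s,k}\subseteq B_k(x_s)$, we have $x_{s^\frown k}\to x_s$. It remains to rule out $x_{s^\frown k}=x_s$ for all large $k$. If this failed at a point, I would follow a branch along which the point stays equal to $x_s$; then $x_s$ lies in every $V_n$ of a play according to $\sigma$, contradicting that $\sigma$ is winning. If necessary I would pass to suitable subsequences of the children.
\item \emph{$Q$ is closed.} Suppose $y\in\overline{Q}\setminus Q$. Since $y\neq x_\emptyset$ and $X$ is Hausdorff, there is $k$ with $y\notin\overline{B_{m_k}(x_\emptyset)}$. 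Every point of $Q$ below the child $j$ of a node lies in $V_{\cdot,j}$. Hence all points below children $j\ge k$ stay away from a neighbourhood of $y$, so $y$ lies in the closure of the subtree of one of the finitely many children $j<k$. Iterating produces a branch $s_0\subset s_1\subset\cdots$ such that $y\in\overline{V_{s_n,j_n}}\subseteq U_{s_{n+1}}\subseteq V_{s_n,j_n}$ for all $n$. Thus $y$ belongs to every $V$ of a play according to $\sigma$, which is a contradiction.
\end{itemize}

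Finally, $Q$ is countable, first-countable and regular. It is therefore second countable and hence metrizable. Being nonempty, countable, metrizable and without isolated points, it is homeomorphic to the rationals by Sierpiński's theorem.

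The main obstacle is the closedness argument. The key choices that make it work are the nested closures $\overline{V}\subseteq U$ and the requirement that the replies at each node shrink to the point $x_s$; the crowdedness check (points not stagnating) is the secondary delicate point.
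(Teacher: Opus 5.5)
The paper gives no proof of this theorem; it quotes it from Debs \cite{D} and uses it as a black box. Your self-contained proof follows the standard lines. Part (1) is a direct strategy for Player I. Part (2) builds a tree of $\sigma$-plays in which Player II's replies at each node form a shrinking neighbourhood base of $x_s$ with closures inside $U_s$. Part (1) is correct as written. Part (2) is correct up to two slips, which you should fix.

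\emph{Closedness step.} You wrote $\overline{V_{s_n,j_n}}\subseteq U_{s_{n+1}}\subseteq V_{s_n,j_n}$, but that is not what you arranged. You arranged $\overline{V_{s,k}}\subseteq U_s$, and the rules give $U_{s^\frown k}\subseteq V_{s,k}$. The first inclusion as written would force $V_{s_n,j_n}$ to be closed. The correct chain is shifted by one index:
\[
y\in\overline{Q_{s_{n+2}}}\subseteq\overline{V_{s_{n+1},j_{n+1}}}\subseteq U_{s_{n+1}}\subseteq V_{s_n,j_n}.
\]
This still yields $y\in\bigcap_n V_{s_n,j_n}$, which is the contradiction you want.

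\emph{Crowdedness step.} You cannot ``rule out $x_{s^\frown k}=x_s$ for all large $k$'' at a single node. A winning strategy may repeat Player I's point for a round, in which case all children of $s$ equal $x_s$. As you phrased it, a failure at one node does not give you an infinite branch.

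What you should prove is that $x_s$ is not isolated in $Q$. Suppose $O$ is a neighbourhood with $O\cap Q=\{x_s\}$. Children converge to their parent, so every node $t$ with $x_t=x_s$ has cofinitely many children $t^\frown k$ with $x_{t^\frown k}\in O\cap Q=\{x_s\}$. This lets you extend $s$ to an infinite branch along which Player I's point is constantly $x_s$. Then $x_s$ lies in every $V$ of that $\sigma$-play, including the moves from the root down to $s$, since $x_s\in U_s$. This contradicts that $\sigma$ is winning. The remark about passing to subsequences of the children is unnecessary.

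With these two corrections the rest goes through: countable plus first-countable gives second countable, Urysohn metrization applies, and Sierpi\'nski's theorem finishes.
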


The hereditary Baire property for first-countable regular spaces also has a useful internal characterization due to van Douwen \cite{vD}, which generalizes the classical result of Hurewicz \cite{H}:
\begin{thm}[van Douwen \cite{vD}]
Let $X$ be a first-countable regular space. Then $X$ is hereditarily Baire if and only if $X$ has no closed subspace homeomorphic to the rationals.
\end{thm}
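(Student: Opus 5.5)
The plan is to derive both implications almost directly from the two parts of Debs' theorem stated above, applied to closed subspaces of $X$. Throughout I use only that closed subspaces of a first-countable regular space are again first-countable and regular, and that a set closed in a closed subspace of $X$ is closed in $X$.

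For the easy direction, suppose $X$ has a closed subspace $Q'$ homeomorphic to the rationals. Then $Q'$ is not Baire. Indeed, enumerate $Q'=\{q_n:n\in\omega\}$; each set $Q'\setminus\{q_n\}$ is open and dense in $Q'$, since $Q'$ has no isolated points, yet their intersection is empty. Hence $X$ has a closed subspace that is not Baire, so $X$ is not hereditarily Baire.

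For the converse, assume $X$ is not hereditarily Baire, and fix a closed subspace $F\subseteq X$ that is not Baire. I apply part (1) of Debs' theorem to the space $F$. Since $F$ is trivially a $G_\delta$ subspace of itself, if Player I had no winning strategy in $\Ch(F)$, then $F$ would be Baire. Therefore Player I has a winning strategy in $\Ch(F)$.

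Next I apply part (2) to $F$. As a subspace of $X$, $F$ is first-countable and regular, so part (2) gives a closed subspace $Q'\subseteq F$ homeomorphic to the rationals. Since $F$ is closed in $X$, $Q'$ is closed in $X$, which completes the proof.

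I do not expect a genuine obstacle: all the depth lies in Debs' results, namely building the game strategy in (1) and extracting a closed copy of $\mathbb{Q}$ from a winning strategy in (2). The one point to check is that both parts are applied to $F$ rather than to $X$. Applying part (1) to $X$ would only yield that $G_\delta$ subspaces of $X$ are Baire, and a closed subspace of $X$ need not be $G_\delta$ without perfectness. Working inside $F$ avoids this issue, because $F$ is $G_\delta$ in itself.
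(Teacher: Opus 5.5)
Your proof is correct. It differs from the paper in that the paper gives no argument for this statement at all. It imports van Douwen's theorem as a black box, a separate ingredient alongside Debs' two propositions. The paper then uses it for the equivalence $(1)\Leftrightarrow(3)$ in Theorem~\ref{thm:choquet}, and for the remark that without perfectness only $(2)\Rightarrow(1)\Leftrightarrow(3)$ survives.

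Your route shows that, granted Debs' results exactly as quoted, van Douwen's theorem is a corollary of them. The easy direction is elementary. The converse comes from playing the strong Choquet game on a non-Baire closed subspace $F$ rather than on $X$. Then Debs~(1) applies to $F$ viewed as a $G_\delta$ subset of itself, and the closed copy of $\mathbb{Q}$ produced in $F$ by Debs~(2) is closed in $X$. This localization is exactly the step where the paper, working with $\Ch(X)$ itself, needs perfectness to pass from a non-Baire closed set to a winning strategy for Player~I.

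The trade-off is that your derivation is only as elementary as Debs' Proposition~2.7, which carries the whole burden of extracting a closed copy of the rationals from a winning strategy. Citing van Douwen keeps that implication independent of the game-theoretic machinery. As a by-product, your argument gives a localized version of condition $(2)$ that needs no perfectness: for first-countable regular $X$, $X$ fails to be hereditarily Baire if and only if Player~I has a winning strategy in $\Ch(F)$ for some closed $F\subseteq X$.
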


From the results above, it is easy to derive the following:

\begin{thm}\label{thm:choquet}
For a first-countable regular perfect space $X$, the following conditions are equivalent:
\begin{enumerate}
    \item[$(1)$] $X$ is not hereditarily Baire,
    \item[$(2)$] Player I has a winning strategy in the strong Choquet game $\Ch(X)$,
    \item[$(3)$] $X$ has a closed countable crowded subspace.
\end{enumerate}
\end{thm}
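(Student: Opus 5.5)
We prove the cycle of implications $(1)\Rightarrow(3)\Rightarrow(2)\Rightarrow(1)$, using the two results of Debs and the theorem of van Douwen quoted above, together with perfectness of $X$.

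$(1)\Rightarrow(3)$. Assume $X$ is not hereditarily Baire. Since $X$ is first-countable and regular, van Douwen's theorem gives a closed subspace $Q\subseteq X$ homeomorphic to the rationals. Then $Q$ is countable and crowded, i.e.\ it has no isolated points, which is $(3)$.

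$(3)\Rightarrow(2)$. Let $D\subseteq X$ be closed, countable and crowded; write $D=\{d_0,d_1,\dots\}$. I would argue by contradiction. Suppose Player I has no winning strategy in $\Ch(X)$. Since $X$ is perfect, $D$ is a $G_\delta$ subset of $X$, so part (1) of Debs' theorem says $D$ is Baire. But each $D\setminus\{d_n\}$ is open in $D$, and it is dense because $D$ has no isolated points. The intersection $\bigcap_n (D\setminus\{d_n\})$ is empty, and $D\neq\emptyset$ if we take "crowded" to mean nonempty, as is standard. This contradicts the Baire property of $D$, so Player I does have a winning strategy.

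$(2)\Rightarrow(1)$. If Player I has a winning strategy, part (2) of Debs' theorem (which needs first-countability and regularity) gives a closed subspace $Q\subseteq X$ homeomorphic to $\mathbb{Q}$. Now $\mathbb{Q}$ is not Baire: the sets $\mathbb{Q}\setminus\{q\}$, for $q\in\mathbb{Q}$, are open dense and have empty intersection. So $Q$ is a closed non-Baire subspace, and $X$ is not hereditarily Baire.

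The only step that needs care is $(3)\Rightarrow(2)$. This is where perfectness is used: it turns the closed set $D$ into a $G_\delta$, so that Debs' part (1) applies. We also need the convention that a crowded space is nonempty.
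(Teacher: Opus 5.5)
Your proof is correct and follows essentially the route the paper intends. The paper writes out no proof and only says the equivalence is easily derived from Debs' two propositions and van Douwen's theorem; perfectness enters exactly where you use it, to make a closed subspace $G_\delta$ so that part (1) of Debs' theorem applies. You are also right that ``crowded'' must be read as nonempty here, since otherwise (3) would hold vacuously.
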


If we were to drop the assumption about the space $X$ being perfect, then the conditions above have the following relationship:
$$
(2) \Rightarrow (1) \Leftrightarrow (3).
$$
Moreover, Zsilinszky (cf. \cite[Example 3.4]{Z}) provided an example of a first-countable space $X$ with a closed copy of the rationals such that Player II has a winning strategy in the game $\Ch(X)$. This shows that to obtain a theorem such as the one above, one needs to assume something more beyond just first-countability.\medskip

In the sequel, we shall also consider the following two-player infinite game on a topological space $X$ at a fixed point $x\in X$, called the $W$-game and denoted by $W(X,x)$.
A play in $W(X,x)$ proceeds as follows: in the $n$-th round, Player I picks an open neighborhood $U_n$ of $x$, and Player II responds by choosing a point $x_n \in U_n$. Player I wins the play if the sequence $(x_n)_{n \in \omega}$ converges to $x$. Otherwise, Player II wins. The notion of a winning strategy for Player I in the game $W(X,x)$ is defined analogously to that in the game $\Ch(X)$.

We say that a point $x\in X$ is a \emph{$W$-point} if Player I has a winning strategy in the game $W(X,x)$, and a space $X$ is a \emph{$W$-space} if all of its points are $W$-points. The game $W(X,x)$ and the concept of a $W$-space were introduced by Gruenhage in \cite{G} as a natural generalization of first-countability.

\subsection{The support function}

Let $X$ and $Y$ be topological spaces. A multivalued mapping $S \colon X \to [Y]^{<\omega}$ is called {\it lower semi-continuous} (lsc for short) if the set
$$
S^{-1}(U) = \{x \in X \; \colon \; S(x) \cap U \neq \emptyset\}
$$
is open, whenever $U$ is an open subset of $Y$.

\begin{defin}\label{def:supplike}
Let $X$ and $Y$ be topological spaces. A multivalued mapping $S \colon X \to [Y]^{<\omega}$ is called {\it supp-like} if it satisfies the following conditions:
\begin{enumerate}
    \item[$(a)$] $S$ is lower semi-continuous.
    \item[$(b)$] The set $S(z)$ is nonempty for all $z \in X$.
\end{enumerate}
Two supp-like mappings $S \colon X \to [Y]^{<\omega}$ and $T \colon Y \to [X]^{< \omega}$ are called {\it linked} if they satisfy the following additional condition:
\begin{enumerate}
    \item[$(c)$] For all $x \in X$ and $y \in Y$ we have 
    $x \in T(S(x))$ and $y \in S(T(y))$.
\end{enumerate}
\end{defin}

Let $S \colon X \to [Y]^{<\omega}$ and $T \colon Y \to [X]^{< \omega}$ be linked supp-like mappings. In the sequel we shall consider the following two sets:
$$
\theta(y) = \{x \in T(y) \colon y \in S(x)\}\;\; \text{and} \;\; \theta(B) = \bigcup\{\theta(y) \colon y \in B\},
$$ 
for $y \in Y$ and $B \subseteq Y$. Points $x \in \theta(y)$ are called {\it $y$-related}, whereas points $x \in \theta(B)$ are called $B$-related. Note that by $(c)$, the set $\theta(y)$ is nonempty for all $y \in Y$.
\smallskip

Let $\phi \colon A(X) \to A(Y)$ be a continuous group monomorphism. For any $x \in X$, there exist distinct points $y_1, \dots , y_n \in Y$ and integers
$a_1, \dots, a_n \in \ZZZ \setminus \{0\}$ such that $\phi(x) = \sum_{i=1}^{n} a_i y_i$. Therefore, to each $x \in X$ we can assign
the following finite subset of $Y$: $$\supp_{\phi}(x)=\{y_1, \dots, y_n\}$$ called {\it the support of $x$}.

\begin{lem}\label{lem:supp_lsc}
Let $\phi \colon A(X)\to A(Y)$ be a continuous group monomorphism. Then the function $\supp_{\phi} \colon X \to [Y]^{<\omega}$ is lower semi-continuous.
\end{lem}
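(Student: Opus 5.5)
The plan is to fix $x_0\in X$ and an open set $U\subseteq Y$ with $\supp_\phi(x_0)\cap U\neq\emptyset$, and to find a neighborhood $W$ of $x_0$ with $\supp_\phi(x)\cap U\neq\emptyset$ for all $x\in W$. Write $\phi(x_0)=\sum_{i=1}^n a_i y_i$ in reduced form, with $y_1\in U$ and $a_1\neq 0$. The idea is to detect $y_1$ with a continuous homomorphism into a discrete (or at least Hausdorff) Abelian group, and then use the universal property of $A(Y)$ together with continuity of $\phi$.

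For the test homomorphism, since $Y$ is Tychonoff, pick a continuous $f\colon Y\to[0,1]$ with $f(y_1)=1$, $f\equiv 0$ on $(Y\setminus U)\cup\{y_2,\dots,y_n\}$; this is possible because $y_2,\dots,y_n$ are finitely many points distinct from $y_1$, so we may shrink $U$ to a neighborhood $U'$ of $y_1$ missing them. View $f$ as a map into the additive topological group $\RRR$. By the universal property it extends to a continuous homomorphism $\tilde f\colon A(Y)\to\RRR$, $\tilde f(\sum b_j z_j)=\sum b_j f(z_j)$. Then $\tilde f(\phi(x_0))=a_1 f(y_1)=a_1\neq 0$.

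Now $g=\tilde f\circ\phi\restriction X\colon X\to\RRR$ is continuous, so $W=\{x\in X\colon g(x)\neq 0\}$ is an open neighborhood of $x_0$. For $x\in W$, writing $\phi(x)=\sum b_j z_j$, we get $\sum b_j f(z_j)\neq0$. Hence some $z_j\in\supp_\phi(x)$ has $f(z_j)\neq 0$, so $z_j\in U'\subseteq U$. This gives $W\subseteq\supp_\phi^{-1}(U)$, so the set is open.

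The only delicate point is choosing $f$ to vanish at the other support points, so that $\tilde f(\phi(x_0))$ is exactly $a_1 f(y_1)$ and cannot cancel. Injectivity of $\phi$ is not needed beyond making $\supp_\phi$ well-defined and nonempty: $\phi(x)\neq 0$ for $x\in X$. I expect no serious obstacle.
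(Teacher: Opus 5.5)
Your proposal is correct and is essentially the paper's proof: the paper also shrinks the open set so that it meets $\supp_{\phi}(x_0)$ only in $y_0$, and takes $g$ with $g(y_0)=1$ vanishing off it. It then shows that the open set $\{x\colon \tilde g(\phi(x))\neq 0\}$ contains $x_0$ and lies in $\supp_{\phi}^{-1}(V)$, exactly as you do.
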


\begin{proof}
Let $V$ be an open subset of $Y$. We need to show that the set
$$
W = \supp_{\phi}^{-1}(V)=\{x\in X \colon \supp_{\phi}(x)\cap V \neq \emptyset\}
$$
is open. To this end, take any $x_0 \in W$. Fix $y_0 \in \supp_{\phi}(x_0)\cap V$. Shrinking $V$ if necessary, without loss of generality, we may assume that $\supp_{\phi}(x_0)\cap V=\{y_0\}$.
Let $g \colon Y\to \RRR$ be a continuous function satisfying $g\upharpoonright(Y \setminus V) = 0$ and $g(y_0) = 1$. By the definition of $A(Y)$, there is the unique extension of $g$ to a continuous homomorphism $\tilde{g} \colon A(Y) \to \RRR$. Consider the set
$U = \{x\in X: \tilde{g}(\phi(x)) \neq 0\}$.
The set $U$ is open in $X$, being the preimage of an open set in $\RRR$ under the continuous mapping 
$\tilde{g}\circ(\phi\upharpoonright X): X\to\RRR$ and, since for some $a_y \in \ZZZ \setminus\{0\}$,
$$
\tilde{g}\circ(\phi\upharpoonright X)(x_0)=\tilde{g}\left( \sum_{y\in\supp_{\phi}(x)}a_yy \right)= \sum_{y\in\supp_{\phi}(x)}a_y g(y) = a_{y_0} \neq 0,
$$
we have $x_0 \in U$. We claim that $U \subseteq W$. Indeed, if $\supp_{\phi}(x) \cap V = \emptyset$ then by the definition of $g$, we have $\tilde{g}\circ(\phi \upharpoonright X)(x)=0$.
\end{proof}

\begin{prop}\label{prop:suppA}
If $\phi \colon A(X) \to A(Y)$ is a topological isomorphism, then the functions $\supp_{\phi}$ and $\supp_{\phi^{-1}}$ are supp-like and linked.
\end{prop}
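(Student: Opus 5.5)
Conditions (a) and (b) are checked separately for $\supp_{\phi}$ and for $\supp_{\phi^{-1}}$; then (c) is proved, which is the only real content. Since $\phi$ is a topological isomorphism, both $\phi$ and $\phi^{-1}$ are continuous group monomorphisms, so Lemma \ref{lem:supp_lsc} applied to each gives that $\supp_{\phi}\colon X\to[Y]^{<\omega}$ and $\supp_{\phi^{-1}}\colon Y\to[X]^{<\omega}$ are lower semi-continuous. For (b), note that $x\neq 0$ in $A(X)$ and $\phi$ is injective, so $\phi(x)$ is not the empty word and its reduced form has at least one point; hence $\supp_\phi(x)\neq\emptyset$. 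The same argument works for $\phi^{-1}$.

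For (c), fix $x\in X$ and write $\phi(x)=\sum_{i=1}^n a_i y_i$ in reduced form, so that $\supp_\phi(x)=\{y_1,\dots,y_n\}$. Applying the homomorphism $\phi^{-1}$ gives
$$x=\phi^{-1}(\phi(x))=\sum_{i=1}^n a_i\,\phi^{-1}(y_i),$$
and each $\phi^{-1}(y_i)$ is a $\mathbb{Z}$-combination of points of $\supp_{\phi^{-1}}(y_i)$. Suppose, towards a contradiction, that $x\notin \bigcup_i \supp_{\phi^{-1}}(y_i)=\supp_{\phi^{-1}}(\supp_\phi(x))$. Then the right-hand side is a $\mathbb{Z}$-linear combination of generators all distinct from $x$. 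In the free Abelian group $A(X)$ such an element has coefficient $0$ at the generator $x$. But the left-hand side $x$ has coefficient $1$ at $x$, which is a contradiction. Hence $x\in T(S(x))$. The symmetric statement $y\in S(T(y))$ follows by exchanging the roles of $\phi$ and $\phi^{-1}$.

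The only point needing care is the use of freeness: coefficients of generators in $A(X)$ are well defined, namely $A(X)\cong\bigoplus_{X}\mathbb{Z}$ algebraically. The coefficient map at $x$ is therefore a homomorphism $A(X)\to\mathbb{Z}$ that vanishes on every generator other than $x$. I do not expect any step to be a genuine obstacle; the proposition is essentially bookkeeping on top of Lemma \ref{lem:supp_lsc}.
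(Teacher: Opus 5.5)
Your proof is correct and follows essentially the same route as the paper: (a) from Lemma~\ref{lem:supp_lsc} applied to $\phi$ and $\phi^{-1}$, (b) from injectivity, and (c) by contradiction, evaluating a homomorphism that is $1$ at $x$ and $0$ on $\supp_{\phi^{-1}}(\supp_\phi(x))$. The only difference is that the paper takes a continuous $g\colon X\to\mathbb{R}$ with $g(x)=1$ and $g(F)=0$ and extends it to $\tilde{g}$, whereas you use the algebraic coefficient homomorphism $A(X)\to\mathbb{Z}$ at $x$; yours is slightly more elementary, since $\phi^{-1}$ is already a homomorphism and so neither continuity nor complete regularity is needed.
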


\begin{proof}
Condition $(a)$ for both functions $\supp_{\phi}$ and $\supp_{\phi^{-1}}$ holds by Lemma \ref{lem:supp_lsc}, whereas condition $(b)$ follows immediately from the definition of $\supp_{\phi}$ or $\supp_{\phi^{-1}}$ and the fact that $\phi$ is a group isomorphism. This proves that both mappings are supp-like.\smallskip

To prove that they are linked, i.e., that the condition $(c)$ holds, assume that there exists $x \in X$ which does not belong to the set $F = \supp_{\phi^{-1}}(\supp_{\phi}(x))$. Denote
$\phi(x) = \sum_{i = 1}^{n} a_i y_i$,
and let
$\phi^{-1}(y_i) = \sum_{j = 1}^{n_i} b_j^i x_j^i$ for $i = 1, \dots, n$.
\smallskip

Our assumption states that $x \neq x_j^i$ for $i = 1, \dots, n$ and $j = 1, \dots, n_i$. Since the set $F$ is finite, there exists a continuous function $g \colon X \to \RRR$ satisfying $g(x) = 1$ and $g(F) \subseteq \{0\}$. Since the real line $\RRR$ is an Abelian topological group, there exists a unique continuous homomorphism $\tilde{g} \colon A(X) \to \RRR$ extending $g$. Note that because $\phi$ is an isomorphism, we have $\tilde{g} = \tilde{g} \circ \phi^{-1} \circ \phi$. In particular:
\begin{equation}
    \begin{split}
        1 & = \tilde{g}(x) = (\tilde{g} \circ \phi^{-1} \circ \phi)(x) = 
        (\tilde{g} \circ \phi^{-1})\left(\sum_{i = 1}^{n} a_i y_i\right) =\\
        & = \tilde{g}\left(\sum_{i = 1}^{n} a_i \phi^{-1}(y_i)\right) =
        \tilde{g}\left(\sum_{i = 1}^{n} \sum_{j = 1}^{n_i} a_i b_j^i x_j^i\right) = \sum_{i = 1}^{n} \sum_{j = 1}^{n_i} a_i b_j^i g(x_j^i) = 0,
    \end{split}
\end{equation}
which yields the desired contradiction. An analogous argument shows that $y \in \supp_{\phi}(\supp_{\phi^{-1}}(y))$ for all $y \in Y$.
\end{proof}

The following theorem (see e.g. \cite[Corollary 7.4.4]{AT}) is essential for our reasoning:

\begin{thm}\label{thm:compact}
Let $K$ be a compact subspace of $A(X)$. Then there exists $n \in \omega$ such that $K \subseteq A_n(X)$.
\end{thm}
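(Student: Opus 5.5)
The plan is to argue by contradiction. Suppose $K\subseteq A(X)$ is compact but $K\not\subseteq A_n(X)$ for every $n$. Then we can pick $\varphi_n\in K$ with $\lh(\varphi_n)>n$. The aim is to produce a continuous function $\Phi$ from $A(X)$ to the reals (or a continuous seminorm on $A(X)$) that is unbounded on $\{\varphi_n\colon n\in\omega\}$. This contradicts compactness of $K$, since a continuous real function on a compact set is bounded. The natural tool is the Graev extension. For every continuous pseudometric $d\le 1$ on $X$, the Graev seminorm $N_d$ on $A(X)$ is continuous, and $N_d(\varphi)$ is the minimal total $d$-cost of pairing the positive and negative letters of $\varphi$, where a letter left unpaired pays distance to a base point. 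We may take the base point adjoined as an isolated point at distance $1$ from $X$.

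First, I would record that $N_d$ is continuous on $A(X)$. This is standard: $N_d$ is a continuous invariant seminorm, because $A(X)$ induces on $X$ its original topology and the Graev extension is the maximal invariant extension. Next, I would reduce to a combinatorial situation. Let $C=\bigcup_n \supp(\varphi_n)$, which is a countable set. Passing to a subsequence, I would try to arrange one of two cases.
\begin{enumerate}
\item[(i)] The sums of the absolute values of the coefficients of $\varphi_n$ tend to infinity on sets that can be separated by finitely many disjoint cozero sets. Then homomorphisms $\tilde g$ built from continuous $g\colon X\to[0,1]$ already take unbounded values on the $\varphi_n$.
\item[(ii)] Otherwise, the large length comes from many positive and negative letters that must be paired. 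Then I would build $d$ so that the supports of the $\varphi_n$ are $d$-uniformly discrete at scale $\varepsilon_n$, with $n\varepsilon_n\to\infty$. This forces $N_d(\varphi_n)\ge \tfrac12\lh(\varphi_n)\,\varepsilon_n\to\infty$.
\end{enumerate}

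Constructing $d$ is where the Tychonoff property enters. I would enumerate $C$ and choose, inductively, continuous functions $f_k\colon X\to[0,1]$ separating the finitely many support points of $\varphi_k$. I would then set $d(x,y)=\sum_k 2^{-k}|f_k(x)-f_k(y)|$, possibly rescaled by weights $w_k$. This $d$ is a continuous pseudometric. The real task is to choose the weights so that $d$ stays continuous while the separation scales do not decay faster than $1/n$ along a subsequence. If $\varphi_n$ has $m_n\ge n$ distinct support points with controlled coefficients, then finitely many $f$'s separate them by some $\delta_n>0$. Continuity only requires $\sum_k w_k\,\|f_k\|_\infty<\infty$. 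So I would pass to a sparse subsequence $n_k$ with $\lh(\varphi_{n_k})\ge 4^k/\delta_{n_k}$ and use weights $w_k=2^{-k}$. This is possible because $\lh(\varphi_n)$ is unbounded, while $\delta$ is fixed once the subsequence index is chosen. Then $N_d(\varphi_{n_k})\ge \tfrac12\,2^{-k}\delta_{n_k}\lh(\varphi_{n_k})\ge 2^{k-1}\to\infty$.

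The main obstacle I expect is justifying the lower bound $N_d(\varphi)\ge \tfrac12\,\varepsilon\,\lh(\varphi)$ when the letters of $\varphi$ are $\varepsilon$-separated in $d$. The difficulty is that the Graev norm allows cancellation only through pairings, and repeated letters with large coefficients must still be counted with multiplicity. I would handle this through the explicit description of $N_d$ on reduced words as a minimum over matchings of the multiset of letters, counted with multiplicity. In an optimal matching, each matched pair of distinct support points costs at least $\varepsilon$; equal points cannot be matched, because the word is reduced; and each unmatched letter costs $1\ge\varepsilon$. This gives the bound. The induction must also ensure that adding later $f_k$ only increases $d$, which is automatic for a sum of nonnegative terms. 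So earlier separations are preserved, and the contradiction with the boundedness of $N_d$ on $K$ follows.
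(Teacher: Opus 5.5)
The paper gives no proof of this statement; it is quoted from \cite[Corollary 7.4.4]{AT}, so your argument has to stand on its own. Its frame is sound. A continuous seminorm is bounded on the compact set $K$. Your matching bound $N_d(\varphi)\ge\frac{\varepsilon}{2}\lh(\varphi)$, for a reduced word whose support points are pairwise at $d$-distance at least $\varepsilon\le 1$, is correct, multiplicities included: all copies of a point carry the same sign, so they never pair with each other. Continuity of $N_d$ is also standard. The reason, however, is the universal property of $A(X)$ applied to the group topology generated by $\hat d$ (in which $X\to A(X)$ is continuous), not maximality of the Graev extension.

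The gap is exactly the step you call the real task. You let $\delta_n>0$ be whatever separation your functions produce on $\supp(\varphi_n)$. You then ask for $n_k$ with $\lh(\varphi_{n_k})\ge 4^k/\delta_{n_k}$, arguing this is possible ``because $\delta$ is fixed once the index is chosen''. That is circular: $\delta_n$ and $\lh(\varphi_n)$ belong to the same word, so you need $\sup_n \lh(\varphi_n)\,\delta_n=\infty$, and nothing in your construction guarantees it.

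With the formula you actually write (one function $f_k\colon X\to[0,1]$ per word) it fails. Since $m$ distinct values in $[0,1]$ have a gap of at most $1/(m-1)$, words with all coefficients $\pm1$ give $\lh(\varphi_n)\,\delta_n\le 2$. The required inequality is then impossible for every $k\ge1$.

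The missing idea is to normalize the separation independently of $n$. For a finite $F\subseteq X$, pick continuous $f_x\colon X\to[0,1]$, $x\in F$, with $f_x(x)=1$ and $f_x(F\setminus\{x\})\subseteq\{0\}$. Set $\rho_F(u,v)=\max_{x\in F}|f_x(u)-f_x(v)|$. This is a continuous pseudometric with $\rho_F\le 1$ and $\rho_F(x,y)=1$ for distinct $x,y\in F$.

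Now choose $n_k$ with $\lh(\varphi_{n_k})\ge 4^k$ and put $d=\sum_{k\ge1}2^{-k}\rho_{\supp(\varphi_{n_k})}$. This is a continuous pseudometric bounded by $1$, and in it $\supp(\varphi_{n_k})$ is $2^{-k}$-separated. Your matching bound then gives $N_d(\varphi_{n_k})\ge 2^{k-1}$, which is the desired contradiction. With this normalization, the case split (i)/(ii), the enumeration of $C$, and the proviso about ``controlled coefficients'' all become unnecessary.
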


In fact, we will need the following corollary of the above:

\begin{cor}\label{cor:bdd_supp}
Assume that for spaces $X$ and $Y$ there exists a topological isomorphism of groups $\phi \colon A(X) \to A(Y)$.
Then for every $W$-point $x \in X$ there exists an open neighborhood $V_x$ of $x$ and a natural number $M$ such that $|\supp_{\phi}(z)| \leq M$ for all $z \in V_x$.
\end{cor}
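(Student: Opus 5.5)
I will argue by contradiction, using the $W$-game strategy at $x$ to build a convergent sequence whose images under $\phi$ have unbounded length, and then apply Theorem \ref{thm:compact}. Note first that $\lh(\phi(z)) \geq |\supp_{\phi}(z)|$ for every $z\in X$, by \eqref{eq:lh}. So it suffices to find a neighborhood $V_x$ of $x$ and an $M$ with $\lh(\phi(z))\le M$ for all $z\in V_x$.

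Fix a winning strategy $\sigma$ for Player I in $W(X,x)$ and suppose no such $V_x$ and $M$ exist. Then every open neighborhood $U$ of $x$ contains, for each $n$, a point $z$ with $|\supp_\phi(z)|> n$. We play the game $W(X,x)$ with Player I following $\sigma$. In round $n$, Player I plays $U_n=\sigma(x_0,\dots,x_{n-1})$. Player II answers with some $x_n\in U_n$ satisfying $|\supp_\phi(x_n)|>n$, which exists by the assumption. Since $\sigma$ is winning, $x_n\to x$. Hence $K=\{x\}\cup\{x_n:n\in\omega\}$ is a compact subset of $X$.

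Because $\phi$ is continuous, $\phi(K)$ is a compact subset of $A(Y)$. By Theorem \ref{thm:compact} there is $N$ with $\phi(K)\subseteq A_N(Y)$. Then $\lh(\phi(x_n))\le N$ for all $n$, and so $|\supp_\phi(x_n)|\le N$ by \eqref{eq:lh}. This contradicts $|\supp_\phi(x_n)|>n$ once $n\ge N$.

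The only delicate point is formal. Player II's choice must be made after seeing $U_n$, and Player II must be able to respond legally against the strategy at every stage. This is guaranteed because the negated hypothesis applies to every neighborhood $U_n$. Continuity of $\phi$ is used only in one direction, so the argument needs only a continuous monomorphism, in line with the definition of $\supp_\phi$. I expect no substantial obstacle beyond this.
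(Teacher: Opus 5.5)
Your proof is correct and is essentially the paper's argument: you use Player I's winning strategy in $W(X,x)$ against the negated hypothesis to build $x_n\to x$ with $|\supp_\phi(x_n)|>n$, then apply Theorem~\ref{thm:compact} to the compact set $\phi(\{x\}\cup\{x_n:n\in\omega\})$ and use \eqref{eq:lh} to reach a contradiction. Your preliminary reduction to bounding $\lh(\phi(z))$ is unnecessary, since the hypothesis you actually negate is the support bound itself, but it does no harm.
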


\begin{proof}
Let $x \in X$ be a $W$-point and let $\sigma$ be a winning strategy for Player I in the $W$-game $W(X,x)$.
Striving for a contradiction, assume that whenever $V$ is an open neighborhood of $x$ and $M$ is any natural number, there is $z \in V$ with $|\supp_{\phi}(z)| > M$. Using the strategy $\sigma$ define by induction a sequence $(x_n)_{n \in \NNN}$ convergent to $x$ such that $|\supp_{\phi}(x_n)| > n$ for all $n$.\smallskip

Assume that the points $x_1, \dots, x_{n - 1}$ satisfying $|\supp_{\phi}(x_i)| > i$ for $i < n$ are already defined. Let $U_{n} = \sigma(x_1, \dots, x_{n - 1})$ be an open neighborhood of $x$. Pick $x_{n} \in U_{n}$ with $|\supp_{\phi}(x_{n})| > n$, which is possible by our assumption on $x$. Since the strategy $\sigma$ is winning, the sequence $(x_n)_{n = 1}^{\infty}$ is convergent to $x$, which finishes the construction.\smallskip

Now, the set $K = \{x_n \colon n \in \NNN\} \cup \{x\}$ is compact, and so the set $\phi(K)$ is compact as well. It follows from Theorem \ref{thm:compact} that there exists $N \in \NNN$ such that $\phi(K) \subseteq A_N(Y)$. In particular, for all $z \in K$, if $\phi(z) = \sum_{i = 1}^{M} a_i y_i$ is a reduced word, then, by (\ref{eq:lh}) we have $M \leq N$. This is a contradiction, because if $n > N$, then by our choice of $x_n$ we have $|\supp_{\phi}(x_n)| > n > N$ and hence the number of summands in a reduced word $\phi(x_n)$ is strictly greater than $N$.
\end{proof}


\section{The hereditary Baire property and the free Abelian topological groups}\label{sec:abelian}

\noindent
In this section we will prove Theorem~\ref{hB-A-invariant}.
Before proceeding to the proof, we need some preparation. A space is {\it crowded} if it has no isolated points.

\begin{lem}\label{lem:case1}
Let $X$ and $Y$ be topological spaces. Assume that the space $Y$ contains a countable crowded $G_{\delta}$ subspace $Q$. Assume further that there exist supp-like linked mappings $S \colon X \to [Y]^{< \omega}$ and $T \colon Y \to [X]^{< \omega}$. Moreover, assume that the following condition holds:
\begin{equation}\tag{$\star$}
\begin{array}{c}
\text{There is a nonempty relatively open set } W \subseteq Q \text{ such that for every } y \in W \\
\text{and every relatively open neighborhood } U \subseteq W \text{ of } y,\ \text{if } x \in \theta(y), \\
\text{then } x \text{ is not isolated in the set } \theta(U).
\end{array}
\end{equation}

Then Player I has a winning strategy in the strong Choquet game $\Ch(X)$.
\end{lem}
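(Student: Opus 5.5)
The plan is to build a winning strategy for Player I in which each of her moves $(x_n,U_n)$ in $X$ comes with a bookkeeping point $y_n\in W$ and an open set $O_n\subseteq Y$. We will keep $x_n\in\theta(y_n)$, so that $y_n\in S(x_n)$, and put $U_n\subseteq S^{-1}(O_n)$. Lower semicontinuity of $S$ then forces every point of $U_n$ to have support meeting $O_n$. The sets $O_n$ are chosen so that they shrink into $Q$ and avoid, one by one, all points of the countable set $W$. A point surviving in $\bigcap_n V_n$ would then need a support point in $W$ that is none of these points, which is impossible.

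\textbf{Setup.} Write $Q=\bigcap_n G_n$ with $G_n$ open in $Y$ and decreasing. Enumerate $W=\{q_0,q_1,\dots\}$. Since $W$ is relatively open in $Q$, fix an open $O\subseteq Y$ with $O\cap Q=W$. Throughout we use that $X$ and $Y$ are Tychonoff, in particular regular and Hausdorff.

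\textbf{First move.} Pick $y_0\in W$ and $x_0\in\theta(y_0)$, which is nonempty by (c). Choose an open $O_0\ni y_0$ with $\overline{O_0}\subseteq O\cap G_0$. Let Player I play $(x_0,U_0)$ with $U_0=S^{-1}(O_0)$; this is open by (a) and contains $x_0$.

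\textbf{Inductive move.} Suppose Player II has answered with $V_n$, where $x_n\in V_n$, $x_n\in\theta(y_n)$ and $y_n\in O_n\cap W$.
\begin{enumerate}
\item Since $\theta(q_n)$ is finite and $X$ is Hausdorff, shrink $V_n$ to an open $V'\ni x_n$ with $V'\cap\theta(q_n)\subseteq\{x_n\}$.
\item Let $U=O_n\cap W$. If $y_n\neq q_n$, additionally remove $q_n$ from $U$; it remains a relatively open neighbourhood of $y_n$ inside $W$.
\item By $(\star)$, $x_n$ is not isolated in $\theta(U)$. So there is $x_{n+1}\in V'\cap\theta(U)$ with $x_{n+1}\neq x_n$, and hence $x_{n+1}\in\theta(y_{n+1})$ for some $y_{n+1}\in U$.
\item We have $y_{n+1}\neq q_n$. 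If $y_n=q_n$ this is because $x_{n+1}\in V'\setminus\{x_n\}$ lies outside $\theta(q_n)$. Otherwise it holds because $q_n\notin U$.
\item By regularity, choose an open $O_{n+1}\ni y_{n+1}$ with $\overline{O_{n+1}}\subseteq O_n\cap G_{n+1}\setminus\{q_n\}$.
\item Play $(x_{n+1},U_{n+1})$ with $U_{n+1}=V'\cap S^{-1}(O_{n+1})$. This is a legal move, since $y_{n+1}\in S(x_{n+1})\cap O_{n+1}$.
\end{enumerate}

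\textbf{Winning.} Suppose $x\in\bigcap_n V_n$. Then $x\in U_n$ for every $n$, so $S(x)\cap O_n\neq\emptyset$ for all $n$. As $S(x)$ is finite and the $O_n$ decrease, a single $y\in S(x)$ lies in every $O_n$. Then $y\in\bigcap_n G_n\cap O=Q\cap O=W$. But $y\neq q_n$ for every $n$, since $q_n\notin O_{n+1}$. This contradicts $W=\{q_0,q_1,\dots\}$, so $\bigcap_n V_n=\emptyset$ and the strategy is winning.

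\textbf{Main obstacle.} The delicate step is (4): making sure the new bookkeeping point differs from $q_n$, which is needed for the diagonal exclusion of all of $W$. The case $y_n=q_n$ is where $(\star)$ alone is not enough. It is handled by first separating $x_n$ from the other, finitely many, $q_n$-related points, which is where finiteness of $\theta(q_n)$ and the Hausdorff property enter.
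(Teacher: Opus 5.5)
Your proof is correct and takes essentially the paper's route. Player I plays $S^{-1}$ of a decreasing sequence of open sets in $Y$ that shrink into the $G_\delta$ set while excluding the $n$-th point of an enumeration; new bookkeeping points $y_{n+1}$ with $x_{n+1}\in\theta(y_{n+1})$ come from $(\star)$; and the win follows by pigeonholing the finite set $S(x)$. The differences are only in bookkeeping. The paper enumerates all of $Q$ and dodges $q_{n+1}$ by noting that non-isolation plus finiteness of $T(y)$ gives infinitely many admissible $y$'s, whereas you enumerate $W$ and dodge $q_n$ with your case split. In addition, your $U_{n+1}=V'\cap S^{-1}(O_{n+1})$ keeps the move inside $V_n$, which the paper's $D_{n+1}=S^{-1}(B_{n+1})$ leaves implicit.
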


\begin{proof}
We shall construct a winning strategy $\sigma$ for Player I in the game $\Ch(X)$ by induction.
Let $\{E_n \colon n \in \omega\}$ be a family of open subsets of $X$ such that $Q = \bigcap_{n \in \omega} E_n$ and let $\{q_n \colon n \in \omega\}$ be an enumeration of $Q$.
Since $Q$ is crowded, we can pick a point $y_0 \in W$ different from $q_0$. Choose an open neighborhood $B_0 \subseteq E_0$ of the point $y_0$ satisfying:
\begin{equation}
q_0 \notin B_0 \;\; \text{and} \;\; B_0 \cap Q \subseteq W.
\end{equation}
For convenience, denote $U_{-1} = B_0$.
Pick a $y_0$-related point $x_0 \in \theta(y_0)$ and let $D_0 = S^{-1}(B_0)$. Then $x_0 \in D_0$, since $y_0 \in B_0$. Define $\sigma(\emptyset) = (x_0, D_0)$. Now, assume that $V_0$ is an open set with $x_0 \in V_0 \subseteq D_0$. Define the set $U_0 = B_0 \cap T^{-1}(V_0)$ and note that it is nonempty, since $y_0$ lies both in $B_0$ and $T^{-1}(V_0)$. Finally, notice that there exists $y_1 \in U_0 \cap Q$ outside the set $S(x_0) \cup \{q_1\}$ such that $\theta(y_1) \cap V_0 \neq \emptyset$.\smallskip

Indeed, by $(\star)$ the point $x_0$ is not isolated in the set $\theta(U_0 \cap Q)$. From this and the fact that $T(y)$ is finite for all $y \in Y$, we infer that $\theta(y) \cap V_0\neq \emptyset$ for infinitely many points $y \in U_0 \cap Q$. Now, the set $S(x_0) \cup \{q_1\}$ is finite, so we may pick $y_1 \in (U_0 \cap Q) \setminus (S(x_0) \cup \{q_1\})$, as desired.\smallskip

Assume that for some $n \geq 1$, the strategy $\sigma$ is already constructed for all admissible $n$-tuples $(V_0, \dots, V_{n - 1}) \in \tau_X^n$, i.e., there is the beginning of the play:
\begin{equation}
\begin{array}{c||c c c c c c c c}
\multirow{2}{*}{\begin{tabular}{c} \text{I} \\ \text{II} \end{tabular}} 
& (x_0, D_0) & & (x_1, D_1) & & \cdots & (x_{n-1}, D_{n-1}) & &  (x_{n}, D_{n}) \\
\cline{1-9}
& & V_0 & & V_1 & \cdots & & V_{n-1}\\
\end{array}
\end{equation}
in the Choquet game on $X$, where Player I plays according to $\sigma$, i.e.,
$$
\sigma(V_0, \dots, V_i) = (x_{i + 1}, D_{i + 1}) \; \text{for} \; i = 0, \dots, n - 1.
$$
Moreover, assume that there exist
\begin{itemize}
    \item points $y_0, \dots, y_{n} \in W$,
    \item open neighborhoods $B_0, \dots, B_{n}$ of $y_0, \dots, y_{n}$ respectively,
    \item open sets $U_0, \dots, U_{n - 1} \subseteq Y$,
\end{itemize}
such that the following conditions hold:
\begin{enumerate}
    \item[(a)] $x_i$ is a $y_i$-related point,
    \item[(b)] $y_i \in B_i \subseteq U_{i - 1} \cap E_i$,
    \item[(c)] $D_i = S^{-1}(B_i)$,
    \item[(d)] $q_i \notin B_i$,
\end{enumerate}
for $i = 0, \dots, n$ and
\begin{enumerate}
    \item[(e)] $U_{i} = B_i \cap T^{-1}(V_{i})$,
\end{enumerate}
for $i = 0, \dots, n - 1$.

Now fix any admissible tuple $(V_0, \dots, V_{n}) \in \tau_X^{n + 1}$. Below we will describe how to construct $\sigma\big((V_0, \dots, V_{n})\big)$ alongside the point $y_{n + 1}$ and its neighborhoods $U_{n}$ and $B_{n + 1}$. Let
\begin{equation}\label{eq:Un}
    U_n = B_n \cap T^{-1}(V_n)
\end{equation}
and notice that by a similar argument as in the base step, we have the following:
%
%
\begin{equation}\label{eq:ind}
\begin{split}
\text{The}&\text{re exists a point } y_{n+1} \in U_n \cap Q \text{ outside the set }\\
& S(x_n) \cup \{q_{n+1}\},
\text{ such that }
\theta(y_{n+1}) \cap V_n \neq \emptyset .
\end{split}
\end{equation}
Pick an open neighborhood $B_{n + 1}$ of the point $y_{n + 1}$ with 
\begin{equation}
    q_{n + 1} \notin B_{n + 1} \text{,} \;\; B_{n + 1} \subseteq U_n \;\; \text{and} \;\; B_{n + 1} \subseteq E_{n + 1}
\end{equation}
and fix $x_{n + 1} \in \theta(y_{n + 1}) \cap V_n$. This is possible by (\ref{eq:ind}). Let
\begin{equation}
    D_{n + 1} = S^{-1}(B_{n + 1}).
\end{equation}
The set $D_{n + 1}$ is open and contains $x_{n + 1}$. Finally, put $\sigma\big((V_0, \dots, V_n)\big) = (x_{n + 1}, D_{n + 1})$. It is straightforward to check that the conditions (a) -- (d) hold for $i = n + 1$. The condition (e) holds for $i = n$, by (\ref{eq:Un}). This finishes the inductive construction of the strategy $\sigma$.
\smallskip

We claim that the strategy $\sigma$ defined as above is winning. Striving for a contradiction, suppose it is not, and fix a play
\[
\begin{array}{c||c c c c c c c c}
\multirow{2}{*}{\begin{tabular}{c} \text{I} \\ \text{II} \end{tabular}} 
& (x_0, D_0) & & (x_1, D_1) & & \cdots & (x_n, D_n) & & \cdots \\
\cline{1-9}
& & V_0 & & V_1 & \cdots & & V_n & \cdots \\
\end{array}
\]
where Player I plays according to the strategy $\sigma$ and loses, i.e., $\bigcap_{n \in \omega} V_n \neq \emptyset$.
Pick any point $x_{\infty} \in \bigcap_{n \in \omega} V_n$.
Since, by condition (c), each set $V_n$ is contained in $S^{-1}(B_n)$, we have $S(x_{\infty}) \cap B_n \neq \emptyset$ for all $n \in \omega$. The set $S(x_{\infty})$ is finite, so there is $y_{\infty} \in S(x_{\infty})$ lying in all sets $B_n$.\smallskip

Now, notice that by (b), we have $y_{\infty} \in \bigcap_{n \in \omega} B_n \subseteq \bigcap_{n \in \omega} E_n = Q$.
However, this cannot be the case, by virtue of condition (d), in which we promised that $y_{\infty} \neq q_n$ for all $n$. This finishes the proof.
\end{proof}
 
\begin{lem}\label{lem:main_tool}
Let $X$ and $Y$ be first-countable spaces such that $Y$ has a countable crowded $G_{\delta}$-subspace $Q$. Assume that there exist supp-like linked mappings $S \colon X \to [Y]^{< \omega}$ and $T \colon Y \to [X]^{< \omega}$ with the following additional property:
\begin{equation}\tag{$\star \star$}
    \begin{aligned}
        \text{There}& \; \text{exists a natural number} \;
        M \in \NNN 
        \; \text{and a relatively open} \;\\ & \text{subset} \;
        V \subseteq Q
        \; \text{such that} \;
        T(y) \in [X]^{M}
        \; \text{for all} \;
        y \in V
    \end{aligned}
\end{equation}
Then the condition $(\star)$ from Lemma \ref{lem:case1} holds. In particular, Player I has a winning strategy in the strong Choquet game $\Ch(X)$.
\end{lem}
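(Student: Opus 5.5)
The plan is to find $W$ by finitely many shrinkings of a relatively open subset of $V$. At each shrinking, one ``branch'' of $T$ becomes permanently unrelated. Since $\theta(y)\neq\emptyset$ for every $y$, this can happen at most $M$ times.

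\emph{Step 1: continuous branches.} Fix $y^*\in V$ and write $T(y^*)=\{x^1,\dots,x^M\}$. Choose pairwise disjoint open sets $O_1,\dots,O_M\subseteq X$ with $x^i\in O_i$ (spaces are Tychonoff, hence Hausdorff). Put
$$W_0=V\cap\bigcap_{i\le M}T^{-1}(O_i).$$
By lower semicontinuity this is relatively open in $Q$, and it contains $y^*$. For $y\in W_0$ the set $T(y)$ has exactly $M$ points and meets each of the $M$ disjoint sets $O_i$. Hence $T(y)\cap O_i$ is a single point $b_i(y)$, and $T(y)=\{b_1(y),\dots,b_M(y)\}$. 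Each $b_i\colon W_0\to X$ is continuous: for an open $G\ni b_i(y)$, the set $T^{-1}(G\cap O_i)\cap W_0$ is a neighbourhood of $y$ on which $b_i$ takes values in $G$. Call $i$ \emph{related at} $y$ if $b_i(y)\in\theta(y)$, i.e.\ $y\in S(b_i(y))$. For a relatively open $W\subseteq W_0$ let $R(W)$ be the set of indices related at some point of $W$. By condition $(c)$ we have $\theta(y)\neq\emptyset$, so $R(W)\neq\emptyset$ whenever $W\neq\emptyset$.

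\emph{Step 2: a failure of $(\star)$ shrinks $R$.} Suppose $W\subseteq W_0$ is nonempty relatively open and $(\star)$ fails for $W$. Then there are $y\in W$, a relatively open $U\subseteq W$ containing $y$, a point $x\in\theta(y)$, and an open $O\ni x$ with $O\cap\theta(U)=\{x\}$. Write $x=b_i(y)$; then $i\in R(W)$. By continuity of $b_i$ there is a relatively open $U'\subseteq U$ containing $y$ with $b_i(U')\subseteq O$. Take any $y'\in U'$ at which $i$ is related. Then $b_i(y')\in\theta(U)\cap O=\{x\}$, so $b_i(y')=x$ and therefore $y'\in S(x)$. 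Now set
$$W'=U'\setminus S(x).$$
Since $S(x)$ is finite and $Q$ is $T_1$, the set $W'$ is relatively open. Since $Q$ is crowded, the nonempty relatively open set $U'$ is infinite, so $W'\neq\emptyset$. By construction $i\notin R(W')$, while $R(W')\subseteq R(W)$. Hence $R(W')\subsetneq R(W)$.

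\emph{Step 3: conclusion.} Start from $W_0$ and repeat Step 2 as long as $(\star)$ fails. Each step removes an index from a nonempty subset of $\{1,\dots,M\}$, and $R$ can never become empty. So after at most $M-1$ steps we reach a nonempty relatively open $W\subseteq Q$ for which $(\star)$ holds. The ``in particular'' then follows directly from Lemma~\ref{lem:case1}.

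The main obstacle is Step 2: showing that isolation of $x$ in $\theta(U)$ forces branch $i$ to be unrelated on a whole nonempty open set. The two ingredients that make this work are the continuity of $b_i$, which holds precisely because $|T(y)|=M$ is constant on $V$, and the removal of the finite set $S(x)$, which uses that $Q$ is crowded. First-countability of $X$ and $Y$ does not seem to be needed beyond what Lemma~\ref{lem:case1} already uses.
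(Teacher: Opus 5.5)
Your proof is correct and follows essentially the same argument as the paper. Both use $|T(y)|=M$ on $V$ together with pairwise disjoint neighbourhoods to index the points of $T(y)$. In both, each failure of $(\star)$ (via isolation of $x$ in $\theta(U)$ and removal of the finite set $S(x)$, using that $Q$ is crowded) makes one index unrelated on a smaller nonempty open set, and this cannot happen $M$ times since $\theta(y)\neq\emptyset$. The differences are only in bookkeeping: you package the paper's repeated shrinking of the sets $D^j_i$ into continuous branch maps $b_i$, and you phrase the argument as a terminating process rather than a proof by contradiction.
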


\begin{proof}
Striving for a contradiction, assume that condition $(\star)$ does not hold. Then we can find
$y_1 \in V$ and its open neighborhood $U_1 \subseteq V$ such that there exists $x \in \theta(y_1)$ which is isolated in $\theta(U_1)$. Denote $T(y_1) = \{x_1^1, \dots, x_M^1\}$ and without loss of generality assume that $x_1^1$ is the point $x$ as above.\smallskip

Let $D^1_1, \dots, D^1_M$ be pairwise disjoint open neighborhoods of $x^1_1, \dots, x^1_M$, respectively, such that $D_1^1$ isolates
$x_1^1$ in $\theta(U_1)$, that is,
$$D_1^1 \cap \theta(U_1) = \{x_1^1\}.$$
Finally, let $W_1$ be any nonempty relatively open subset of $V$ satisfying
$$
W_1 \subseteq \bigcap_{i = 1}^M T^{-1}(D^1_i) \cap U_1 \quad \text{and} \quad W_1 \cap S(x_1^1) = \emptyset.
$$
Such a set exists, because $Q$ is crowded and the set $S(x_1^1)$ is finite.
The set $W_1$ is a nonempty relatively open subset of $Q$ with the following property:
\begin{equation}\label{eq9}
    \text{if} \;\; y \in W_1, \; \text{then} \;\; \theta(y) \cap D_1^1 = \emptyset.
\end{equation}
In other words, all $y$-related points miss the set $D_1^1$ for every $y \in W_1$. To show \eqref{eq9}, fix $y \in W_1$. Then, for each $1\leq i\leq M$, we have
\begin{equation}\label{eq10}
|T(y) \cap D_i^1| = 1,
\end{equation}
because by $(\star\star)$, $T(y)\in [X]^M$ and, since $y\in W_1$, the set $T(y)$ meets every set in the pairwise disjoint family $\{ D^1_i:i=1,\ldots , M\}$. Aiming at a contradiction, suppose that
$\theta(y) \cap D_1^1 \neq \emptyset$. Then we infer from \eqref{eq10} and $D_1^1\cap \theta(U_1)=\{x^1_1\}$ that $x^1_1\in \theta(y)$. But then, according to the definition of $\theta(y)$, we get $y\in S(x^1_1)$, violating the choice of $y\in W_1$. Condition \eqref{eq9} is proved.

Since
$$
\emptyset \neq \theta(y) \subseteq T(y) \subseteq \bigcup_{i = 1}^M D_1^i,
$$
we infer from \eqref{eq9} that $\theta(y)\subseteq D_2^1\cup \dotsb \cup D_M^1$ for all $y \in W_1$.
\smallskip

Using the failure of $(\star)$ again, we can find $y_2\in W_1$ and an open neighborhood $U_2\subseteq W_1$ of $y_2$ such that there exists $x\in \theta(y_2)$ which is isolated in $\theta(U_2)$. Set $T(y_2)=\{x^2_1,\ldots , x^2_M\}$. Note that, since $y_2\in W_2\subseteq W_1$, we have $|T(y_2)\cap D^1_i|=1$ for each $1\leq i\leq M$ (cf. \eqref{eq10}). Thus, without loss of generality we can assume that $T(y_2)\cap D^1_i=\{x^2_i\}$, for $i=1,\ldots , M$. Since $x\in \theta(y_2)$, according to \eqref{eq9}, $x\neq x^2_1$. Without loss of generality, we can assume that $x=x^2_2$, i.e., the point $x^2_2$ is isolated in $\theta(U_2)$.
Let $D^2_2$ be an open subset of $D^1_2$ such that
$$D_2^2 \cap \theta(U_2) = \{x_2^2\}.$$
For $i\in \{1,\ldots ,M\}\setminus\{2\},\; \text{ set }\;
D_i^2=D_i^1.$

Arguing as above, we can find a nonempty open subset $W_2$ of $U_2$ such that if $y\in W_2$, then $\theta(y)\cap D^2_2=\emptyset$ (cf. \eqref{eq9}). This together with property \eqref{eq9} implies that $\theta(y)\subseteq D_3^2\cup\dotsb \cup D_M^2$ for all $y\in W_2$.

\smallskip

Repeating an analogous argument recursively finitely many times, we construct, for each $j=1,2,\ldots ,M$, nonempty open sets $D_1^j,\ldots D_M^j$ such that $D_i^1\supseteq \dotsb \supseteq D_i^M$ for $i=1,\ldots ,M$ and open sets $W_1\supseteq W_2\supseteq \dotsb \supseteq W_M$ such that
\begin{equation}\label{eq11a}
 T(y)\subseteq D_1^i\cup\dotsb \cup D_M^i\; \text{ for }\;y\in W_i
\end{equation}
and,
for each $i=1, \ldots ,M$,
\begin{equation}\label{eq11b}
 \text{if }\; y\in W_i \;\text{ then } \;\theta(y)\cap (D_1^i\cup\dotsb \cup D_i^i)=\emptyset.
\end{equation}

However, for $i=M$, conditions \eqref{eq11a} and \eqref{eq11b} mean that $\theta(y)=\emptyset$ for $y\in W_M$. This contradicts the assumption that $T$ and $S$ are linked (cf. the remark following Definition \ref{def:supplike}).
\end{proof}

Now we can prove Theorem \ref{hB-A-invariant}.

\begin{proof}[Proof of Theorem \ref{hB-A-invariant}]
Let $\phi \colon A(X) \to A(Y)$ be a topological isomorphism of groups. By symmetry, it is enough to prove that if $Y$ is not a hereditarily Baire space, then $X$ is not hereditarily Baire either. Let us assume that $Y$ is not hereditarily Baire. By Theorem \ref{thm:choquet}, the space $Y$ has a closed copy of the rationals; denote it by $Q$. We will prove that Player I has a winning strategy in the strong Choquet game, which by Theorem \ref{thm:choquet}, will finish the proof.\smallskip

By Proposition \ref{prop:suppA}, the multivalued mappings $\supp_{\phi}$ and $\supp_{\phi^{-1}}$ are supp-like and linked. By Corollary \ref{cor:bdd_supp}, there exist a natural number $M \in \NNN$ and an open subset $W$ of $Q$ such that $\supp_{\phi^{-1}}(y)\in[X]^{\leq M}$ for all $y\in W$. Define
$$
N = \max\{|\supp_{\phi^{-1}}(y)| \; \colon \; y \in W\}
$$
and fix $y \in W$ such that $|\supp_{\phi^{-1}}(y)| = N$.
Denote $\supp_{\phi^{-1}}(y) = \{x_1, \dots, x_N\}$ and fix a family $\{B_1, \dots, B_N\}$ of pairwise disjoint open subsets of $X$ such that $x_i \in B_i$ for $i = 1, \dots, N$. Finally, define an open set:
$$
V = W \cap \bigcap_{i = 1}^{N} \supp_{\phi^{-1}}^{-1}(B_i).
$$
The set $V$ is an open, nonempty (because $y\in V$) subset of $Q$ with the property that for all $z \in V$ we have $|\supp_{\phi^{-1}}(z)| = N$, i.e., the set $V$ along with the supp-like linked mappings $\supp_{\phi}$ and $\supp_{\phi^{-1}}$ satisfy the assumption of Lemma \ref{lem:main_tool}, which yields the desired winning strategy for Player I in the strong Choquet game $\Ch(X)$. This finishes the proof of the theorem.
\end{proof}


\section{Scattered-like properties, continuous linear surjections of $C_p$-spaces, and further remarks on $A$-equivalence}\label{sec:scattered}
\noindent The main tool used in the proofs below is the support map for linear mappings between function spaces. Suppose that for $X$ and $Y$ there exists a continuous linear mapping $\phi \colon C_p(X) \to C_p(Y)$. For every $y \in Y$, we define $\supp_{\phi}(y)$ as the set \footnote{To denote the support of a linear mapping between function spaces, we use the same symbol as for the support in the context of free topological groups (cf. Section 2.3). This should not cause any confusion, since we will never consider function spaces and free topological groups simultaneously.} of all points $x \in X$ such that for every open neighborhood $U$ of $x$ there exists a function $f \in C_p(X)$ satisfying $f(X \setminus U) \subseteq \{0\}$ and $\phi(f)(y) \neq 0$.
The key properties of the multivalued mapping $y\mapsto \supp_{\phi}(y)$ are given in the following well-known lemma (see \cite[Lemma 6.8.2]{vM}):

\begin{lem}
Let $\phi \colon C_p(X) \to C_p(Y)$ be a continuous linear surjection. Then for every $y\in Y$ the set $\supp_\phi(y)$ is a nonempty finite subset of $X$ and the multivalued function $\supp_{\phi} \colon Y \to [X]^{< \omega}$ is supp-like.
\end{lem}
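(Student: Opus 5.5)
We will prove the three claims in turn: $\supp_\phi(y)$ is nonempty, it is finite, and the map $y\mapsto\supp_\phi(y)$ is lower semi-continuous. Throughout we fix $y\in Y$ and use the continuous linear functional $\mu_y=\delta_y\circ\phi$ on $C_p(X)$.

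\emph{Finiteness.} A continuous linear functional on $C_p(X)$ is a finite linear combination of point evaluations. So $\mu_y=\sum_{i=1}^k c_i\delta_{x_i}$ with the $x_i$ distinct and $c_i\neq0$. We claim that $\supp_\phi(y)=\{x_1,\dots,x_k\}$, which gives finiteness immediately.
\begin{itemize}
\item If $x\notin\{x_1,\dots,x_k\}$, take an open $U\ni x$ missing all $x_i$. Every $f$ vanishing off $U$ vanishes at each $x_i$, so $\phi(f)(y)=\mu_y(f)=0$. Hence $x\notin\supp_\phi(y)$.
\item Conversely, for each $x_j$ and each open $U\ni x_j$, separate the finitely many points. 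Choose $f$ supported in $U$ with $f(x_j)=1$ and $f(x_i)=0$ for $i\ne j$. Then $\mu_y(f)=c_j\neq0$, so $x_j\in\supp_\phi(y)$.
\end{itemize}
The only nontrivial ingredient is the representation of $\mu_y$. It is standard: continuity at $0$ gives a finite set $F$ and $\varepsilon>0$ with $|\mu_y(f)|<1$ whenever $\sup_F|f|<\varepsilon$. Linearity then shows that $\mu_y$ vanishes on functions that vanish on $F$. Therefore $\mu_y$ factors through the restriction map to $\RRR^F$.

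\emph{Nonemptiness.} Here we use surjectivity of $\phi$. The constant function $1$ on $Y$ equals $\phi(f)$ for some $f$. If $\supp_\phi(y)$ were empty, then $k=0$ and $\mu_y=0$, which gives $1=\phi(f)(y)=0$, a contradiction.

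\emph{Lower semi-continuity.} This is the step needing most care. Let $U\subseteq X$ be open and suppose $\supp_\phi(y_0)\cap U\ni x_j$. Pick $f$ with support in $U$ such that $f(x_j)=1$ and $f$ vanishes at the other $x_i$. Then $\phi(f)(y_0)=c_j\neq0$. Since $\phi(f)$ is continuous on $Y$, the set $O=\{y:\phi(f)(y)\neq0\}$ is an open neighbourhood of $y_0$. For $y\in O$ write $\mu_y=\sum_i d_i\delta_{z_i}$ as above, with $\{z_i\}=\supp_\phi(y)$. Since $f$ vanishes off $U$, we get $0\neq\mu_y(f)=\sum_{z_i\in U}d_if(z_i)$. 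Hence some $z_i$ lies in $U$, i.e.\ $\supp_\phi(y)\cap U\neq\emptyset$. Thus $\supp_\phi^{-1}(U)$ contains the neighbourhood $O$ of $y_0$, so it is open.

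The main point to get right is the first step, identifying $\supp_\phi(y)$ with the support of the measure $\mu_y$. Once that identification is in place, the other two properties follow directly as shown.
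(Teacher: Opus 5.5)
Your proof is correct and complete. The paper gives no argument of its own for this lemma and simply cites \cite[Lemma 6.8.2]{vM}; your route is the standard proof of that fact: write $\delta_y\circ\phi$ as a finite combination $\sum_i c_i\delta_{x_i}$ of point evaluations (via continuity at $0$ plus scaling) and show that $\supp_\phi(y)=\{x_1,\dots,x_k\}$. As a bonus, the same identification immediately gives the property quoted in the proof of Lemma \ref{lem:fin} (\cite[Lemma 6.8.1]{vM}), namely that $\phi(f)(y)$ depends only on $f\upharpoonright\supp_\phi(y)$.
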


The next lemma belongs to the mathematical folklore; we include a proof for completeness.

\begin{lem}\label{lem:fin}
Suppose that $\phi \colon C_p(X) \to C_p(Y)$ is a continuous linear surjection. If $F$ is a finite subset of $X$, then the cardinality of the set
$\{y \in Y  \colon  \supp_{\phi}(y) \subseteq F\}$ does not exceed the size of $F$.
\end{lem}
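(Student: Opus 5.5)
The plan is to show that the restriction map from the image is injective on functions supported near $F$, using surjectivity of $\phi$. Let $F=\{x_1,\dots,x_k\}$ and suppose, towards a contradiction, that there are $k+1$ distinct points $y_0,\dots,y_k\in Y$ with $\supp_\phi(y_j)\subseteq F$ for all $j$.

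The key fact I would establish first is the standard consequence of the definition of the support. If $g\in C_p(X)$ vanishes on a neighborhood of $\supp_\phi(y)$, then $\phi(g)(y)=0$. From this I would derive, by linearity and continuity of the functional $f\mapsto\phi(f)(y)$ on $C_p(X)$, that $\phi(f)(y)$ depends only on the values of $f$ on $\supp_\phi(y)$. More precisely, there are reals $c_x^y$ such that $\phi(f)(y)=\sum_{x\in\supp_\phi(y)}c^y_x f(x)$.

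To justify this, recall that a continuous linear functional on $C_p(X)$ is a finite linear combination of point evaluations, $\mu=\sum_{i} c_i\delta_{z_i}$ with the $z_i$ distinct and the $c_i\neq 0$. If some $z_i\notin\supp_\phi(y)$, then by definition there is a neighborhood $U$ of $z_i$ such that every $f$ vanishing off $U$ has $\phi(f)(y)=0$. Shrink $U$ so that it avoids the other $z_j$, and choose $f$ with $f(z_i)=1$ vanishing off $U$; then $\mu(f)=c_i\neq0$, a contradiction. So the functional $y\mapsto\phi(\cdot)(y)$ is a combination of evaluations at points of $\supp_\phi(y)\subseteq F$. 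This step is the main (though mild) obstacle, and I would keep it brief, citing the standard fact about the dual of $C_p(X)$.

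Consequently each functional $\phi(\cdot)(y_j)$, $j=0,\dots,k$, lies in the span of $\delta_{x_1},\dots,\delta_{x_k}$, which has dimension $k$. Hence there are reals $\lambda_0,\dots,\lambda_k$, not all zero, with $\sum_j\lambda_j\phi(f)(y_j)=0$ for every $f\in C_p(X)$. By surjectivity, this gives $\sum_j\lambda_j g(y_j)=0$ for every $g\in C_p(Y)$. Since $Y$ is Tychonoff and the $y_j$ are distinct, pick $g\in C_p(Y)$ with $g(y_{j_0})=1$ and $g(y_j)=0$ for $j\neq j_0$, where $\lambda_{j_0}\ne0$. This yields $\lambda_{j_0}=0$, a contradiction, so the set in question has at most $|F|$ points.
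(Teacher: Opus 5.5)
Your proof is correct and is essentially the paper's argument. The paper cites the fact that $\phi(f)(y)$ depends only on $f\upharpoonright\supp_\phi(y)$ to define a linear map $\mathbb{R}^n\to\mathbb{R}^{n+1}$, $t\mapsto(\phi(f)(y_j))_j$, and gets a contradiction from its surjectivity; you instead prove that fact yourself via the representation of continuous functionals on $C_p(X)$ as finite combinations of point evaluations, and then run the transposed dimension count (linear dependence of $k+1$ functionals in the span of $\delta_{x_1},\dots,\delta_{x_k}$), which is the same linear algebra.
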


\begin{proof}
Denote $F = \{x_1, \dots, x_n\}$. Aiming at a contradiction, suppose that there are at least $n + 1$ distinct points $y_1, \dots, y_{n + 1}$ satisfying $\supp_{\phi}(y_i) \subseteq F$.

Define a mapping $T \colon \RRR^n \to \RRR^{n + 1}$ as follows: for $(t_1, \dots, t_n) \in \RRR^n$, choose any function $f \in C_p(X)$ such that $f(x_i) = t_i$ for $i = 1, \dots , n$ and let
\[
T(t_1, \dots, t_n) = (\phi(f)(y_1), \dots, \phi(f)(y_{n + 1})).
\]
It is a well-known fact that if two functions $f$ and $g$ agree on $\supp_{\phi}(y)$ for some point $y$, then the functions $\phi(f)$ and $\phi(g)$ have the same values at $y$ (see \cite[Lemma 6.8.1]{vM}). In particular, it follows that the value of $T$ at $(t_1, \dots, t_n)$ does not depend on the choice of $f$. So the mapping $T$ is well defined.\smallskip

It is easy to verify that $T$ is linear. Let us check that it is also surjective. To this end, pick any point $(s_1, \dots, s_{n + 1}) \in \RRR^{n + 1}$ and $g \in C_p(Y)$ such that $g(y_i) = s_i$ for $i = 1, \dots, n + 1$. Since $\phi$ is surjective, there exists $f \in C_p(X)$ such that $\phi(f) = g$. Clearly,
\[
T(f(x_1), \dots, f(x_n)) = (s_1, \dots, s_{n + 1}).
\]
This yields a contradiction, since $\mathbb{R}^n$ cannot be mapped linearly onto $\mathbb{R}^{n+1}$.
\end{proof}

It turns out that the same assertion as in Lemma \ref{lem:fin} holds for the support of topological isomorphisms between free (Abelian) topological groups.

\begin{lem}\label{lem:fin_grp}
Suppose that $\phi \colon A(X) \to A(Y)$ is a topological isomorphism. If $F$ is a finite subset of $Y$ then the cardinality of the set
$\{x \in X  \colon  \supp_{\phi}(x) \subseteq F\}$ does not exceed the size of $F$.
\end{lem}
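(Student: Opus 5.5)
We argue by a rank count inside the free Abelian group, the analogue of the dimension argument in Lemma \ref{lem:fin}. Write $F=\{y_1,\dots,y_n\}$ and let $A(F)$ denote the subgroup of $A(Y)$ generated by $F$; it is a free Abelian group of rank $n$. Suppose, aiming at a contradiction, that there are $n+1$ distinct points $x_1,\dots,x_{n+1}\in X$ with $\supp_{\phi}(x_i)\subseteq F$ for every $i$.

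The first step is to note what the support condition means algebraically. By the definition of $\supp_{\phi}$, the reduced form of $\phi(x_i)$ only involves letters from $\supp_{\phi}(x_i)\subseteq F$. Hence $\phi(x_i)\in A(F)$ for each $i=1,\dots,n+1$.

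The second step is to see that these $n+1$ elements are independent. The points $x_1,\dots,x_{n+1}$ are distinct generators of the free Abelian group $A(X)$, so they are $\ZZZ$-linearly independent. Since $\phi$ is injective, being an isomorphism, the elements $\phi(x_1),\dots,\phi(x_{n+1})$ are also $\ZZZ$-linearly independent. Concretely, if $\sum_i c_i\phi(x_i)=0$, then $\phi\bigl(\sum_i c_i x_i\bigr)=0$, so $\sum_i c_i x_i=0$ and therefore all $c_i=0$.

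The final step derives the contradiction. We now have $n+1$ linearly independent elements inside $A(F)\cong\ZZZ^n$. Tensoring with $\mathbb{Q}$, or equivalently viewing $\ZZZ^n\subseteq\mathbb{Q}^n$ and clearing denominators, produces $n+1$ linearly independent vectors in $\mathbb{Q}^n$, which is impossible. This proves that at most $|F|$ points $x$ satisfy $\supp_{\phi}(x)\subseteq F$.

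There is no real obstacle here. Topology is not needed, since only the injectivity of $\phi$ is used. The only point requiring care is the standard fact that a free Abelian group of rank $n$ contains no $n+1$ independent elements, and the rationalization above handles it.
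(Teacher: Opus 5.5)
Your proof is correct, but it takes a different route from the paper. The paper mirrors Lemma~\ref{lem:fin}: it dualizes $\phi$ to $\varphi\colon C_p(Y)\to C_p(X)$, $f\mapsto(\tilde f\circ\phi)\upharpoonright X$. It then checks that $\varphi$ is a linear bijection, whose inverse $h\mapsto(\tilde h\circ\phi^{-1})\upharpoonright Y$ needs $\phi^{-1}$ to be a continuous homomorphism. It also checks that $\varphi(f)(x)$ depends only on $f\upharpoonright\supp_{\phi}(x)$. From this it produces a linear surjection $\RRR^n\to\RRR^{n+1}$ exactly as in the $C_p$ case. You stay inside the free Abelian groups instead: the $\phi(x_i)$ lie in the rank-$n$ subgroup generated by $F$ and are $\ZZZ$-independent by injectivity, which is impossible after passing to $\mathbb{Q}$. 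Your argument is shorter and more general, since it uses only that $\phi$ is an injective homomorphism, with no continuity and no surjectivity. The paper's version instead buys uniformity with Lemma~\ref{lem:fin}, so one linear-algebra template serves both the $C_p$ and the free-group settings. The two proofs share the same core. In the paper's proof, $T$ is given by the $(n+1)\times n$ integer matrix of coefficients of $\phi(x_1),\dots,\phi(x_{n+1})$ with respect to $y_1,\dots,y_n$. Its surjectivity is therefore precisely the row independence you verify directly; the paper simply obtains it from the surjectivity of $\varphi$ rather than from the injectivity of $\phi$.
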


\begin{proof}
The proof is essentially the same as that of Lemma \ref{lem:fin}, so we only point out the main differences.\smallskip

If $f \in C_p(Y)$, then by property (ii) in the definition of $A(Y)$ and the fact that $\RRR$ is an Abelian topological group, there exists a unique continuous group homomorphism $\Tilde{f} \colon A(Y) \to \RRR$ extending $f$. Then the function $\Tilde{f} \circ \phi \colon A(X) \to \RRR$ is also a continuous group homomorphism, which, moreover, is the unique continuous group homomorphism extending the continuous function $\varphi(f) = (\Tilde{f} \circ \phi)\upharpoonright X$. To repeat the proof of the previous lemma, one needs to check that the assignment
$$
C_p(Y) \ni f \mapsto \varphi(f) \in C_p(X)
$$
is a linear surjection and that the following claim holds:
\begin{claim}
If $f, g \in C_p(Y)$ agree on the finite set $\supp_{\phi}(x)$ for some $x \in X$, then $\varphi(f)(x) = \varphi(g)(x)$.
\end{claim}
To verify this claim, let $\phi(x) = \sum_{i = 1}^n a_i y_i$. Then
\begin{equation}
    \begin{aligned}
        \varphi(f)(x) = (\Tilde{f} \circ \phi)(x) = \Tilde{f}\left(\sum_{i = 1}^n a_i y_i\right) = \sum_{i = 1}^n a_i f(y_i) =\\
        = \sum_{i = 1}^n a_i g(y_i) = \Tilde{g}\left(\sum_{i = 1}^n a_i y_i\right) = (\Tilde{g} \circ \phi)(x)
    \end{aligned}
\end{equation}
which proves the claim. The linearity of $\varphi$ is straightforward. Finally, $\varphi$ is bijective because
$$
C_p(X) \ni h \mapsto \psi(h) = (\Tilde{h} \circ \phi^{-1}) \upharpoonright Y \in C_p(Y)
$$
is its inverse.
\end{proof}

\subsection{Scattered-like properties and continuous linear surjections of $C_p$-spaces}

Recall that a space $X$ is \textit{scattered} if every nonempty subset of $X$ has an isolated point. A space $X$ is \emph{strongly $\sigma$-scattered} if $X$ is a countable union of closed scattered subspaces. Recently, several properties related to scatteredness have been actively studied, also in the context of linear surjections of function spaces $C_p(X)$ (see, e.g. \cite{KKL}, \cite{ELV}).
Given a family $\mathcal{A}$ of subsets of a space $X$, we say that $\mathcal{A}$ is \emph{point-finite} if for each $x\in X$ the collection $\{A\in \mathcal{A}:x\in A\}$ is finite. The family $\mathcal{A}$ is \emph{strongly point-finite} if there is a point-finite family $\{U_A:A\in \mathcal{A}\}$ of open subsets of $X$ satisfying $A\subseteq U_A$ for every $A\in \mathcal{A}$. Following \cite{Sa}, we say that a space $X$ has \emph{property $(\kappa)$} if every pairwise disjoint sequence of finite subsets of $X$ has an infinite strongly point-finite subsequence.
The following definition was given in \cite{KKL}:
\begin{defin}
A space $X$ satisfies property $\Delta_1$ (respectively, property $\Delta_2$) if every sequence $\{F_n: n\in \omega\}$ of pairwise disjoint finite (respectively, compact) subsets of $X$ is strongly point-finite.
\end{defin}
We have the following chain of implications between the properties defined above (see Theorems 2.15 and 3.9 in \cite{KKL}):
\begin{equation}\label{eq12}
\text{scattered}\Rightarrow \text{strongly $\sigma$-scattered}\Rightarrow \Delta_2 \Rightarrow \Delta_1 \Rightarrow \text{($\kappa$)}.
\end{equation}

It was established in \cite[Theorem 3.16]{KKL} that property $\Delta_1$ is preserved by continuous linear surjections between $C_p(X)$ spaces. It seems that the question of whether an analogous result holds for the other properties listed in~\eqref{eq12} has remained (and for some still remains) open. Below we shall prove the corresponding assertion for property $(\kappa)$ and strong $\sigma$-scatteredness. This answers a question of Leiderman (personal communication).

\begin{prop}\label{proposition_property_kappa}
Suppose that there is a continuous linear surjection of $C_p(X)$ onto $C_p(Y)$. If $X$ satisfies property $(\kappa)$, then so does $Y$.
\end{prop}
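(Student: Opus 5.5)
We pull the pairwise disjoint sequence in $Y$ back to $X$ through the support map. We use only two facts. First, by the lemma quoted from \cite[Lemma 6.8.2]{vM}, $\supp_\phi\colon Y\to[X]^{<\omega}$ is supp-like: it has nonempty finite values and is lower semi-continuous. Second, Lemma~\ref{lem:fin} says that for finite $F\subseteq X$ only finitely many $y\in Y$ satisfy $\supp_\phi(y)\subseteq F$. We may assume the given pairwise disjoint finite sets $G_n\subseteq Y$ are all nonempty. Otherwise either infinitely many are empty, in which case that subsequence is trivially strongly point-finite (take $U=\emptyset$), or we discard the finitely many empty ones. For each $n$ put $F_n=\bigcup_{y\in G_n}\supp_\phi(y)$, a nonempty finite subset of $X$.

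The $F_n$ need not be disjoint, so we first thin out the sequence. We choose indices $n_0<n_1<\dots$ recursively, starting from $n_0=0$. Suppose $n_0,\dots,n_k$ are chosen and put $H_k=F_{n_0}\cup\dots\cup F_{n_k}$. By Lemma~\ref{lem:fin}, the set $E_k=\{y\in Y\colon \supp_\phi(y)\subseteq H_k\}$ is finite. Since the $G_n$ are pairwise disjoint, only finitely many of them meet $E_k$, so we can choose $n_{k+1}>n_k$ with $G_{n_{k+1}}\cap E_k=\emptyset$. Now set $A_0=F_{n_0}$ and $A_{k+1}=F_{n_{k+1}}\setminus H_k$. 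These are pairwise disjoint finite subsets of $X$. Moreover, every $y\in G_{n_k}$ satisfies $\supp_\phi(y)\cap A_k\neq\emptyset$. For $k=0$ this holds because supports are nonempty. For $k\ge1$, the choice of $n_k$ gives $y\notin E_{k-1}$, so $\supp_\phi(y)\not\subseteq H_{k-1}$, while $\supp_\phi(y)\subseteq F_{n_k}$. In particular each $A_k$ is nonempty.

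Next we apply property $(\kappa)$ of $X$ to $(A_k)_k$. This gives an infinite set $K\subseteq\omega$ and open sets $U_k\supseteq A_k$, $k\in K$, such that $\{U_k\colon k\in K\}$ is point-finite. We push these back to $Y$ by setting $V_k=\supp_\phi^{-1}(U_k)=\{y\in Y\colon \supp_\phi(y)\cap U_k\neq\emptyset\}$. Each $V_k$ is open by lower semi-continuity. Each contains $G_{n_k}$, because every $y\in G_{n_k}$ has a support point in $A_k\subseteq U_k$. The family $\{V_k\colon k\in K\}$ is point-finite: for $y\in Y$, each of the finitely many points of $\supp_\phi(y)$ lies in only finitely many $U_k$, so $y$ lies in only finitely many $V_k$. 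Hence $(G_{n_k})_{k\in K}$ is an infinite strongly point-finite subsequence, and $Y$ has $(\kappa)$.

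The only delicate step is the disjointification. The supports $F_n$ may share points, so $(\kappa)$ cannot be applied to them directly. The inductive choice of $n_{k+1}$ via Lemma~\ref{lem:fin} is what guarantees that each retained $G_{n_k}$ still meets, through the support map, its disjoint portion $A_k$. Everything else is routine bookkeeping with lower semi-continuity.
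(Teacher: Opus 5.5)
Your proof is correct and follows the paper's route. You pull the sets back via $\supp_\phi$ and thin the sequence with Lemma~\ref{lem:fin}, so that each retained set meets its disjointified part $A_k$ through the supports. You then apply $(\kappa)$ in $X$ and transport the point-finite expansion back with $\supp_\phi^{-1}$. The only cosmetic differences are that you disjointify against $F_{n_0}\cup\dots\cup F_{n_k}$ rather than $F_0\cup\dots\cup F_{n_k}$, and that you explicitly dispose of empty sets, which the paper excludes by assumption.
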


\begin{proof}
Let $\phi \colon C_p(X) \to C_p(Y)$ be a continuous linear surjection. Fix a sequence $\Ss = \{S_n:n=1,2,\ldots\}$ of nonempty finite pairwise disjoint subsets of $Y$. Put
$$F_0=\emptyset \quad \mbox{and}\quad F_n=\bigcup_{y \in S_n} \supp_{\phi}(y)\mbox{ for }n=1,2,\ldots.$$
Clearly, all sets $F_n$ are finite subsets of $X$.

Let $n_0=0$.
Inductively, construct an increasing sequence $n_1<n_2<\dotsb$ of natural numbers such that the following condition is satisfied for $k=1,2,\ldots$:
\begin{equation}\label{condition}
 \forall y\in S_{n_k}\;\;\supp_\phi(y)\not\subseteq F_0\cup\dotsb \cup F_{n_{k-1}}
\end{equation}
Take $n_1=1$ and suppose that for some $k\geq 1$ the numbers $n_1<\dotsb<n_k$ are already defined in such a way that condition \eqref{condition} is satisfied. In order to find $n_{k+1}$ let us consider the following finite subset $F$ of $X$:
$$F=F_0\cup\dotsb \cup F_{n_{k}}.$$
The sequence $\mathcal{S}$ consists of pairwise disjoint nonempty sets, so by Lemma \ref{lem:fin} there must be $n_{k+1}>n_k$ such that
if $y\in S_{n_{k+1}}$, then $\supp_\phi(y)\not\subseteq F$. This gives \eqref{condition} for $n_{k+1}$ and finishes the inductive construction.

To prove that $Y$ has property $(\kappa)$, it suffices to find a subsequence of the sequence
$\{S_{n_k}:k=1,2,\ldots\}$ which is strongly point-finite. To avoid multiple indexing let us put
$$T_k=S_{n_k},\mbox{ for }k=1,2,\ldots.$$
Henceforth, we will work with the
sequence $\{T_k:k=1,2,\ldots\}$ of pairwise disjoint finite subsets of $Y$ and we will prove that it admits a strongly point-finite subsequence.

For $k=1,2\ldots$, define
$$A_k=F_{n_k}\setminus \left( F_{n_0}\cup\dotsb \cup F_{n_{k-1}} \right).$$
The sequence $\{A_k:k=1,2,\ldots\}$ consists of pairwise disjoint finite subsets of $X$. So by the property $(\kappa)$ of $X$, there is a subsequence $\{A_{k_i}:i=1,2,\ldots\}$ that admits a point-finite open expansion $\{U_{i}:i=1,2,\ldots\}$, i.e., $A_{k_i}\subseteq U_i$. Let
$$
V_i=\supp_{\phi}^{-1}(U_i)=\{y \in Y \colon \supp_{\phi}(y) \cap U_i \neq \emptyset\}, \mbox{ for }i=1,2,\ldots.
$$

By lower semicontinuity of $\supp_\phi$, the set $V_i$ is open in $Y$ for every $i$.
It follows from \eqref{condition} that if $y\in T_{k_i}$, then
$$\supp_\phi(y)\cap A_{k_i}\neq\emptyset.$$
Hence, $T_{k_i}\subseteq V_i$. It remains to check that the family $\{V_i:i=1,2,\ldots\}$ is point-finite. To this end, pick $y\in Y$. If $y\in V_m$, then $\supp_\phi(y)$ meets $U_m$. Since the set $\supp_\phi(y)$ is finite and the family $\{U_i:i=1,2,\ldots\}$ is point-finite, the point $y$ can belong only to finitely many sets $V_i$.
\end{proof}

%
Moreover, it turns out that we may drop the assumption that $\phi$ is linear at the cost of assuming that $\phi$ is open.
Recall that a topological space $X$ satisfies \textit{property~$(B)$} if there exists a countable family $\{A_n \colon n \in \omega\}$ of closed nowhere dense subsets of $X$ such that for any compact subset $K \subseteq X$ there is $n \in \omega$ with $K\subseteq A_n$.
 
\begin{prop}
Suppose that $\phi \colon C_p(X) \to C_p(Y)$ is an open continuous surjection. If $X$ satisfies $(\kappa)$ then $Y$ satisfies $(\kappa)$ as well.
\end{prop}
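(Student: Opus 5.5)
The plan is to avoid support maps altogether, since $\phi$ is no longer linear, and to pass through a topological property of $C_p(X)$ itself that characterizes $(\kappa)$ and is stable under open continuous images. The natural candidate is Sakai's characterization \cite{Sa}: a space $Z$ has property $(\kappa)$ if and only if $C_p(Z)$ is \emph{$\kappa$-Fréchet--Urysohn}. Here a space $E$ is $\kappa$-Fréchet--Urysohn if for every open set $U\subseteq E$ and every $e\in\overline{U}$ there is a sequence of points of $U$ converging to $e$. With this characterization, the proposition reduces to the claim that the $\kappa$-Fréchet--Urysohn property passes from $C_p(X)$ to its open continuous image $C_p(Y)$.

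The first step is to invoke Sakai's theorem for $X$, so that $C_p(X)$ is $\kappa$-Fréchet--Urysohn. The second step is the preservation argument. Let $V\subseteq C_p(Y)$ be open and let $g\in\overline{V}$. Since $\phi$ is surjective, choose $f\in C_p(X)$ with $\phi(f)=g$. Because $\phi$ is open and continuous, $\phi^{-1}(\overline{V})=\overline{\phi^{-1}(V)}$. Indeed, if $f\notin\overline{\phi^{-1}(V)}$, then some open $O\ni f$ misses $\phi^{-1}(V)$; then $\phi(O)$ is an open neighbourhood of $g$ missing $V$, contradicting $g\in\overline V$. Hence $f$ lies in the closure of the open set $\phi^{-1}(V)$. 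This yields a sequence $f_n\in\phi^{-1}(V)$ with $f_n\to f$. By continuity, $\phi(f_n)\in V$ and $\phi(f_n)\to g$. Thus $C_p(Y)$ is $\kappa$-Fréchet--Urysohn, and Sakai's theorem applied to $Y$ gives property $(\kappa)$ for $Y$.

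The main obstacle is the exact form of Sakai's equivalence. In particular, one must check that it holds for arbitrary Tychonoff spaces, and that the open-set formulation of $\kappa$-Fréchet--Urysohn is the right one, rather than a variant with regular open sets or with points of closures of arbitrary sets. Only the open-set form is obviously preserved by open maps. If Sakai's theorem is not available in the needed generality, I would instead prove the direction ``$C_p(Y)$ $\kappa$-Fréchet--Urysohn $\Rightarrow$ $Y$ has $(\kappa)$'' directly. Given a disjoint sequence $(S_n)$ of finite sets in $Y$, take the open set $V=\bigcup_n\{h: h\upharpoonright S_n>1\}$, whose closure contains the zero function. A sequence $h_k\in V$ converging to $0$, with $h_k\upharpoonright S_{n_k}>1$, yields the open sets $\{h_k>1/2\}\supseteq S_{n_k}$, which form a point-finite family, and passing to a subsequence of distinct $n_k$ gives the required strongly point-finite subsequence. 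The converse direction for $X$, producing the sequence from $(\kappa)$, is the harder half, and is where I would rely on \cite{Sa}.
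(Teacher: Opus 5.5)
Your proof is correct, but it takes a different route from the paper. The paper argues by contraposition through property $(B)$. If $Y$ fails $(\kappa)$, then by \cite{KKM} $C_p(Y)$ has a countable family of closed nowhere dense sets $A_n$ absorbing all compact sets. Since $\phi$ is open and continuous, the sets $\phi^{-1}(A_n)$ are closed nowhere dense in $C_p(X)$, and they absorb every compact $L\subseteq C_p(X)$ because $\phi(L)$ is compact. So $C_p(X)$ has $(B)$, and again by \cite{KKM}, $X$ fails $(\kappa)$.

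You instead push a positive property forward along $\phi$. Sakai's theorem in \cite{Sa}, the paper the authors cite for the definition of $(\kappa)$, states that a Tychonoff space $Z$ has $(\kappa)$ if and only if $C_p(Z)$ is $\kappa$-Fr\'echet--Urysohn. It uses exactly Arhangel'skii's open-set formulation that you work with, so your concern about the generality and form of the equivalence can be dropped. Your check that $f\in\overline{\phi^{-1}(V)}$ for an open continuous $\phi$ is right, and so is the transfer of the convergent sequence.

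The two arguments are dual and equally short. The paper's is a ``bad'' property inherited by open preimages; yours is a ``good'' property inherited by open images. Yours relies only on the classical characterization from \cite{Sa}, and you also prove its easy half directly. The paper's relies on the more recent characterization of $(\kappa)$ through property $(B)$ from \cite{KKM}.

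In your direct argument, make one point explicit. Since each $S_n\neq\emptyset$ and $h_k\to 0$ pointwise, each index occurs only finitely often among the $n_k$. Hence you can choose the $n_k$ strictly increasing, which gives a genuine subsequence as the definition of $(\kappa)$ requires.
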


\begin{proof}
Assume that $Y$ does not satisfy $(\kappa)$; we will show that $X$ does not either. By \cite[Theorem 1.2]{KKM}, the space $C_p(Y)$ satisfies property $(B)$. Let $\{A_n  \colon  n \in \NNN\}$ be a family of closed nowhere dense subsets of $C_p(Y)$ such that for any compact subset $K \subseteq C_p(Y)$, there exists $n \in \NNN$ for which $K \subseteq A_n$.

Since $\phi$ is open and continuous, the family $\{\phi^{-1}(A_n)  \colon  n \in \NNN\}$ consists of closed and nowhere dense subsets of $C_p(X)$. It is also straightforward to check that if $L$ is a compact subset of $C_p(X)$ then there is $n \in \NNN$ for which $L \subseteq \phi^{-1}(A_n)$. Finally, from Theorem 1.1 in \cite{KKM} we derive that $X$ does not satisfy $(\kappa)$ as required.
\end{proof}

%

Now, let us prove Theorem~\ref{thm:Cp-scattered}, which asserts that for any Tychonoff space, strong $\sigma$-scatteredness is invariant under continuous linear surjections between function spaces.

\begin{proof}[Proof of Theorem \ref{thm:Cp-scattered}]
Let $\phi \colon C_p(X) \to C_p(Y)$ be a continuous linear surjection. Write $X = \bigcup_{k \in \NNN} X_k$, where $X_k$ is scattered and closed in $X$ for all $k\in \NNN$. Since the union of two closed scattered sets is scattered, without loss of generality we may assume that $X_k \subseteq X_{k+1}$ for all $k \in \NNN$.\smallskip

For each $k \in \NNN$ define
\[
Y_k = \{y \in Y  \colon  \supp_{\phi}(y) \subseteq X_k\}
\]
and notice that $Y = \bigcup_{k \in \NNN} Y_k$. Since $X_k$ is closed, it follows from the lower semicontinuity of $\supp_{\phi}$ that each set $Y_k$ is closed in $Y$.

Now, for $n \in \NNN$ let
\[
Y_{k,n} = \{y \in Y_k  \colon  |\supp_{\phi}(y)| \leq n\}.
\]
Again, using the lower semicontinuity of $\supp_{\phi}$, it is easy to verify that the set $Y_{k,n}$ is closed in $Y_k$. Hence, all sets $Y_{k,n}$ are closed in $Y$. It is enough to show the following
\begin{claim}
The set $Y_{k,n}$ is scattered for all $k$ and $n$.
\end{claim}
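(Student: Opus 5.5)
We argue by induction on $n$: we show that every nonempty subset $A \subseteq Y_{k,n}$ has an isolated point. Fix such an $A$.

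First, choose the largest possible support size on $A$. Let $m=\max\{|\supp_\phi(y)| \colon y\in A\}\leq n$. The set $A'=\{y\in A \colon |\supp_\phi(y)|=m\}$ is relatively open in $A$. Indeed, if $y\in A'$ has $\supp_\phi(y)=\{x_1,\dots,x_m\}$, take pairwise disjoint open sets $B_i\ni x_i$. By lower semicontinuity, the set $\bigcap_i \supp_\phi^{-1}(B_i)$ is an open neighbourhood of $y$. Every $z\in A$ in this neighbourhood has $|\supp_\phi(z)|\geq m$, hence exactly $m$, and $\supp_\phi(z)$ meets each $B_i$ in exactly one point. So it suffices to find an isolated point of $A'$ inside such a neighbourhood $O\cap A$. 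On $O\cap A$ this defines maps $s_i\colon O\cap A\to X_k\cap B_i$, where $s_i(z)$ is the unique point of $\supp_\phi(z)\cap B_i$.

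Second, we would like to use scatteredness of $X_k$ coordinatewise. The set $s_1(O\cap A)\subseteq X_k$ is nonempty, so it has an isolated point $p_1$. Choose an open $G_1\subseteq B_1$ with $G_1\cap s_1(O\cap A)=\{p_1\}$. Then $O_1=O\cap \supp_\phi^{-1}(G_1)$ is open, and on $O_1\cap A\neq\emptyset$ we have $s_1\equiv p_1$. We repeat this with $s_2$ on $O_1\cap A$, getting $O_2$ with $s_1\equiv p_1$ and $s_2\equiv p_2$, and so on up to $s_m$. The result is a nonempty relatively open set $O_m\cap A$ on which $\supp_\phi(z)=\{p_1,\dots,p_m\}=F$ for all $z$. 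By Lemma \ref{lem:fin}, at most $m$ points $y$ satisfy $\supp_\phi(y)\subseteq F$. Hence $O_m\cap A$ is a nonempty finite, relatively open subset of $A$. In a $T_1$ space, any point of a finite nonempty relatively open set is isolated in $A$ after shrinking the neighbourhood to exclude the other finitely many points. This gives the required isolated point.

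The step needing care is the one where $s_i$ is well defined and where $s_i\equiv p_i$ on the shrunken set. This relies on each support meeting each $B_i$ exactly once, which is guaranteed by maximality of $m$ together with disjointness of the $B_i$. It also relies on $\supp_\phi(z)\subseteq X_k$, so that the points $s_i(z)$ lie in the scattered set $X_k$; this holds since $A\subseteq Y_k$. Note that no induction on $n$ is actually needed, since maximality of $m$ handles everything directly, and neither continuity of $s_i$ nor closedness of the sets is used.
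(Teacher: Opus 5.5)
Your proof is correct and follows essentially the same route as the paper. Both arguments maximize the support size $m$ on $A$, separate the support of a maximizing point by pairwise disjoint open sets, use lower semicontinuity and the scatteredness of $X_k$ one coordinate at a time to freeze the support on a nonempty relatively open piece of $A$, and then conclude finiteness via Lemma \ref{lem:fin}. The opening announcement of an induction on $n$ is superfluous, as you note yourself, and should simply be deleted.
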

\begin{proof}
Fix any nonempty subset $A \subseteq Y_{k,n}$. We will show that there is a nonempty open subset $V$ of $Y$ such that $V\cap A$ is finite. To this end, let
\[
m = \max\{|\supp_{\phi}(y)|  \colon  y \in A\}.
\]
The number $m$ is well defined since for all $y \in A$ we have $|\supp_{\phi}(y)| \leq n$. Pick any point $a_0 \in A$ such that $|\supp_{\phi}(a_0)| = m$ and denote
\[
\supp_{\phi}(a_0) = \{x_1, \dots, x_m\}.
\]
Fix pairwise disjoint open neighborhoods $U_1, \dots, U_m$ of the points $x_1, \dots, x_m$, respectively, and let
\[
V_0 = \bigcap_{i=1}^m \supp_{\phi}^{-1}(U_i).
\]
Note that if $y \in V_0 \cap A$ then the support of $y$ hits each $U_i$ exactly once.\smallskip

We will pick:
\begin{itemize}
\item nonempty open subsets $V_1, \dots, V_m$ of $Y$,
\item points $t_1 \in U_1, \dots, t_m \in U_m$, 
\item points $a_1 \in V_1 \cap A, \dots, a_m \in V_m \cap A$, 
\end{itemize}
in such a way that if $1 \leq j \leq i \leq m$ and $y \in V_i \cap A$, then
\[
\supp_{\phi}(y) \cap U_j = \{t_j\},
\]
while the points $a_i$ will witness that the sets $V_i \cap A$ are nonempty.\smallskip

Suppose that for some $0 \leq i < m$, the tuples of points $(t_1, \dots, t_i)$, $(a_0, \dots, a_i)$ and the sets $V_0, V_1, \dots, V_i$ are already defined (if $i = 0$, treat the tuple $(t_1, \dots, t_i)$ as empty) and consider the following set
\[
T = \bigcup\{ \supp_{\phi}(y) \cap U_{i + 1}  \colon  y \in V_i \cap A \}.
\]
Since $T$ is a subset of a scattered space $X_k$, it contains a relatively isolated point $t_{i + 1}$. Take $a_{i + 1} \in V_i \cap A$ satisfying $\supp_{\phi}(a_{i + 1}) \cap U_{i + 1} = \{t_{i + 1}\}$. Since $t_{i + 1}$ is relatively isolated in $T$, there exists an open set $W \subseteq X$ such that $W \cap T = \{t_{i + 1}\}$. Finally, define
\[
V_{i + 1} = \supp_{\phi}^{-1}(W\cap U_{i+1}) \cap V_i,
\]
and note that $a_{i + 1} \in V_{i + 1}$. It is readily seen that if $y \in V_{i + 1} \cap A$ then $\supp_{\phi}(y) \cap U_{j} = \{t_j\}$ for $j = 1, \dots, i + 1$. This finishes the induction.\smallskip

Let $V = V_m$. From the construction above, each $y \in V \cap A$ has the same support. Namely, $\supp_{\phi}(y) = \{t_1, \dots, t_m\}$ for all $y\in V\cap A$. Hence, by Lemma \ref{lem:fin} the set $V\cap A$ is finite.
\end{proof}

\end{proof}

%





\subsection{On the preservation of scatteredness under the $A$-equivalence relation}
In this section, we shall prove Theorem \ref{thm_scattered_main}. In fact, as we have already mentioned in the Introduction, we shall show the following slightly more general result.

\begin{thm}
Let $X$ and $Y$ be $A$-equivalent spaces and suppose that every closed subset of $X$ and $Y$ has a $W$-point. Then $X$ is scattered if and only if $Y$ is scattered.
\end{thm}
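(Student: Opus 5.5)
We show that if $X$ is scattered then so is $Y$; the converse follows by symmetry, applying the same argument to $\phi^{-1}$. Fix a topological isomorphism $\phi\colon A(X)\to A(Y)$ and write $S=\supp_{\phi^{-1}}\colon Y\to[X]^{<\omega}$. By Lemma \ref{lem:supp_lsc}, $S$ is lower semi-continuous. By Lemma \ref{lem:fin_grp} applied to $\phi^{-1}$, for every finite $F\subseteq X$ the set $\{y\in Y\colon S(y)\subseteq F\}$ has at most $|F|$ elements. A space is scattered iff every nonempty closed subset has an isolated point, so fix a nonempty closed $A\subseteq Y$. We must find a nonempty relatively open subset of $A$ that is finite; since $Y$ is $T_1$, such a finite set contains an isolated point of $A$.

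\emph{Step 1: localized bounded support.} By hypothesis $A$ has a $W$-point $y_0$, meaning Player I has a winning strategy in $W(A,y_0)$. We rerun the proof of Corollary \ref{cor:bdd_supp} inside $A$. Suppose that every relatively open neighbourhood of $y_0$ in $A$ contains points with arbitrarily large $|S(\cdot)|$. Playing against the winning strategy in $W(A,y_0)$, we produce $y_n\in A$ with $|S(y_n)|>n$ and $y_n\to y_0$. Then $K=\{y_0\}\cup\{y_n\colon n\in\NNN\}$ is compact, so $\phi^{-1}(K)$ is compact in $A(X)$. By Theorem \ref{thm:compact}, $\phi^{-1}(K)\subseteq A_N(X)$ for some $N$, and (\ref{eq:lh}) then gives $|S(y_n)|\le N$ for all $n$, a contradiction. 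Hence there are a nonempty relatively open $B\subseteq A$ and an $n$ with $|S(y)|\le n$ for all $y\in B$. The only point to check is that the corollary's argument uses only convergent sequences and compactness, both of which live inside the subspace $A$. I expect this to be the main subtlety, because the hypothesis provides $W$-points of closed subsets, not of $Y$ itself.

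\emph{Step 2: freezing the supports, as in the Claim in the proof of Theorem \ref{thm:Cp-scattered}.} Let $m=\max\{|S(y)|\colon y\in B\}$ and pick $a_0\in B$ with $S(a_0)=\{x_1,\dots,x_m\}$. Choose pairwise disjoint open neighbourhoods $U_i\ni x_i$ and set $V_0=B\cap\bigcap_i S^{-1}(U_i)$. This set is relatively open in $A$ and contains $a_0$. By maximality of $m$, every $y\in V_0$ has exactly one support point in each $U_i$.

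Now recursively, for $i<m$, let $T=\bigcup\{S(y)\cap U_{i+1}\colon y\in V_i\}$. This is a nonempty subset of the scattered space $X$, so it has a relatively isolated point $t_{i+1}$, witnessed by an open $W\subseteq X$ with $W\cap T=\{t_{i+1}\}$. Put $V_{i+1}=V_i\cap S^{-1}(W\cap U_{i+1})$. This set is nonempty, since it contains any $y\in V_i$ whose support meets $W\cap U_{i+1}$, and it is relatively open in $A$. For $y\in V_{i+1}$ we get $S(y)\cap U_j=\{t_j\}$ for all $j\le i+1$.

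After $m$ steps, every $y\in V_m$ satisfies $S(y)=\{t_1,\dots,t_m\}$. By Lemma \ref{lem:fin_grp}, $V_m$ has at most $m$ elements. Thus $V_m$ is a nonempty finite relatively open subset of $A$, so $A$ has an isolated point, and $Y$ is scattered.
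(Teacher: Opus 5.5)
Your proof is correct and follows the paper's argument essentially step for step; the roles of $X$ and $Y$ are swapped, which is immaterial by symmetry. Both obtain bounded supports near a $W$-point via Theorem~\ref{thm:compact}, then run the support-freezing recursion from the Claim in Theorem~\ref{thm:Cp-scattered}, and finish with Lemma~\ref{lem:fin_grp}. Your explicit rerun of Corollary~\ref{cor:bdd_supp} inside the closed set $A$ is the right reading of that step: the point is only a $W$-point of $A$, and the paper's direct appeal to the corollary, which as stated yields a neighbourhood open in the whole space, glosses over this.
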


\begin{proof}
This proof is very similar to that of Theorem \ref{thm:Cp-scattered}. By symmetry, it is enough to prove that if $Y$ is scattered, then $X$ is scattered as well. To this end, let $\phi \colon A(X) \to A(Y)$ be a topological isomorphism. Fix any nonempty $A \subseteq X$. We aim at finding a nonempty open subset $V \subseteq X$ such that the set $A \cap V$ is finite. Without loss of generality we may assume that $A$ is closed (if that is not the case, then we take its closure).

From our assumption, the set $A$ has a $W$-point, say $x$. By Corollary \ref{cor:bdd_supp}, there is an open neighborhood $E \subseteq X$ of $x$ and a natural number $M$ satisfying
$$
\forall z \in E \quad |\supp_{\phi}(z)| \leq M.
$$
In particular, for all $z \in A \cap E$ the size of the support of $z$ does not exceed $M$. Now we need to repeat the proof of the Claim in Theorem \ref{thm:Cp-scattered} to obtain a nonempty relatively open set $V \subseteq A$ and a finite set $\{t_1, \dots, t_m\} \in [Y]^m$ with the property that
$$
\forall a \in V \quad \supp_{\phi}(a) = \{t_1, \dots, t_m\},
$$
where $m = \max\{|\supp_{\phi}(z)| \colon z \in A \cap E\}$. Finally, from Lemma \ref{lem:fin_grp} we derive the conclusion that the set $V$ must be finite, which finishes the proof of this theorem.
\end{proof}

Let us conclude the paper with the following remark:

\begin{rmk}\label{final_remark}
It follows from Theorem \ref{thm:Cp-scattered} that answering Problem~\ref{problem1} in the affirmative resolves Problem~\ref{problem2} in the affirmative as well. To see this, first note that any scattered space is hereditarily Baire. Indeed, let $X$ be scattered. Since the property of being scattered is hereditary, it suffices to show that $X$ is Baire. To this end, fix a sequence $\{A_n \colon n \in \omega\}$ of closed nowhere dense subsets of $X$. Striving for a contradiction, suppose that $\bigcup_{n \in \omega} A_n$ contains a nonempty open subset $U$.  Since $X$ is scattered, $U$ contains an isolated point $x \in U$, which is also isolated in $X$ because $U$ is open. Thus, $x \in U \subseteq \bigcup_{n \in \omega} A_n$. However, no set $A_n$ can contain an isolated point of $X$ because each $A_n$ is nowhere dense, a contradiction.

Next, note that if $Y$ is hereditarily Baire and strongly $\sigma$-scattered, then $Y$ is scattered. Indeed, write $Y=\bigcup_{n\in \omega} Y_n$, where each $Y_n$ is scattered and closed in $Y$. Aiming at a contradiction, suppose that $Y$ is not scattered. Then there is a closed crowded subset $C$ of $Y$, which must be Baire by our assumption. It follows that there exists $n\in \omega$ such that $Y_n\cap C$ contains a nonempty open subset $V$ of $C$. Since $Y_n$ is scattered, $V$ contains an isolated point, which is also isolated in $C$ because $V$ is open in $C$. This contradicts the fact that $C$ is crowded.
\end{rmk}


\bibliographystyle{siam}
\bibliography{bib.bib}

\end{document}